\numberwithin{equation}{section}
\setlist[enumerate,1]{label=\bf (\roman*)}
\newtcolorbox{standout}{
  colback=gray!15,
  boxrule=0pt,
  left=.3cm,
  right=.3cm,
  top=.18cm,
  bottom=.18cm,
  boxsep=0pt
}
\DeclareFontFamily{T3}{lmr}{}
\DeclareFontShape{T3}{lmr}{m}{n}{<-> ssub * cmr/m/n}{}
\theoremstyle{plain}
\newtheorem{theorem}{Theorem}[section]
\newtheorem{lemma}[theorem]{Lemma}
\newtheorem{proposition}[theorem]{Proposition}
\newtheorem{corollary}[theorem]{Corollary}
\theoremstyle{definition}
\newtheorem{definition}[theorem]{Definition}
\newtheorem{example}[theorem]{Example}
\newtheorem{notation}[theorem]{Notation}
\theoremstyle{remark}
\newtheorem{remark}[theorem]{Remark}
\crefname{equation}{}{}
\crefname{section}{\S}{\S\S}
\crefname{subsection}{\S}{\S\S}
\crefname{subsubsection}{\S}{\S\S}
\crefname{definition}{Def.}{Defs.}
\crefname{theorem}{Thm.}{Thms.}
\crefname{corollary}{Cor.}{Cors.}
\crefname{lemma}{Lem.}{Lems.}
\crefname{proposition}{Prop.}{Props.}
\crefname{remark}{Rem.}{Rems.}
\crefname{notation}{Ntn.}{Ntns.}
\crefname{fact}{Fact}{Fact}
\crefname{example}{Ex.}{Exs.}
\crefname{figure}{Fig.}{Figs.}
\crefname{table}{Tab.}{Tabs.}
\crefname{footnote}{ftn.}{ftns.}
\Crefname{footnote}{Ftn.}{Ftns.}
\definecolor{darkblue}{rgb}{0.05,0.25,0.65}
\definecolor{darkgreen}{RGB}{20,140,10}
\definecolor{lightgray}{rgb}{0.9,0.9,0.9}
\definecolor{darkorange}{RGB}{200,100,5}
\definecolor{darkyellow}{rgb}{.91,.91,0}
\definecolor{lightolive}{RGB}{225, 220, 185}
\let\originalsslash\sslash
\renewcommand{\sslash}{\mathord{\originalsslash}}
\tikzset{
  snake left/.style={
    rounded corners,
    to path={
      let \p1 = (\tikztostart.east),
          \p2 = (\tikztotarget.west),
          \p3 = ($(\p1)!0.5!(\p2)$),
          \n1 = {8pt} 
      in
      (\p1)
      -- (\x1 + \n1, \y1)
      -- (\x1 + \n1, \y3)
      -- (\x2 - \n1, \y3) \tikztonodes
      -- (\x2 - \n1, \y2)
      -- (\p2)
    }
  }
}
\tikzset{
  uphordown/.style={
    rounded corners,
    to path={
      let \p1 = (\tikztostart.north),
          \p2 = (\tikztotarget.north),
          \n1 = {max(\y1,\y2) + 8pt}
      in
      (\p1)
      -- (\x1, \n1)
      -- (\x2, \n1) \tikztonodes 
      -- (\p2)
    }
  }
}
\tikzset{
  downhorup/.style={
    rounded corners,
    to path={
      let \p1 = (\tikztostart.south),
          \p2 = (\tikztotarget.south),
          \n1 = {min(\y1,\y2) - 8pt}
      in
      (\p1)
      -- (\x1, \n1)
      -- (\x2, \n1) \tikztonodes 
      -- (\p2)
    }
  }
}
\tikzset{
  rightvertleft/.style={
    rounded corners,
    to path={
      let \p1 = (\tikztostart.east),
          \p2 = (\tikztotarget.east),
          \n1 = {max(\x1,\x2) + 8pt}
      in
      (\p1)
      -- (\n1, \y1)
      -- (\n1, \y2) \tikztonodes 
      -- (\p2)
    }
  }
}
\tikzset{
  leftvertright/.style={
    rounded corners,
    to path={
      let \p1 = (\tikztostart.west),
          \p2 = (\tikztotarget.west),
          \n1 = {min(\x1,\x2) - 8pt}
      in
      (\p1)
      -- (\n1, \y1)
      -- (\n1, \y2) \tikztonodes 
      -- (\p2)
    }
  }
}
\newcommand{\defneq}{\equiv}
\newcommand{\cpt}{\mathpalette\cpt@inner\relax}
\newcommand{\cpt@inner}[2]{%
  \scalebox{0.5}[0.9]{$#1\cup$}
  #1\{\infty\}
}
\newcommand{\grayunderbrace}[2]{\mathcolor{gray}{\underbrace{\mathcolor{black}{#1}}}_{\mathcolor{gray}{#2}}}
\newcommand{\inlinetikzcd}[1]{\begin{tikzcd}[sep=small, ampersand replacement=\&]#1\end{tikzcd}}
\newcommand
  {\reduced}
  {\Re}
\newcommand
  {\CartesianSpaces}
  {\mathrm{CrtSp}}
\newcommand
  {\HaloedCartesianSpaces}
  {\mathrm{Frm}\CartesianSpaces}
\newcommand
  {\SmoothSets}
  {\mathrm{SmthSet}}
\newcommand
  {\HaloedSmoothSets}
  {\mathrm{Frm}\SmoothSets}
\begin{document}

\title{Synthetic Differential Jet Bundles are Reduced}

\thanks{\emph{Funding}:
IK's work is supported by the \emph{Czech Academy of Sciences} Research Plan {\tt RVO:67985840}.
The remaining authors are supported by \emph{Tamkeen UAE} under the 
\emph{NYU Abu Dhabi Research Institute grant} {\tt CG008}}

\author{Grigorios Giotopoulos}
\address{Mathematics Program and Center for Quantum and Topological Systems, New York University Abu Dhabi}
\curraddr{}
\email{gg2658@nyu.edu}
\thanks{}

\author{Igor Khavkine}
\address{Institute of Mathematics of the Czech Academy of Sciences, Prague}
\curraddr{\v{Z}itn\'a 25, 110 00 Prague 1, Czechia}
\email{khavkine@math.cas.cz}
\thanks{}

\author{Hisham Sati}
\address{Mathematics Program and Center for Quantum and Topological Systems, New York University Abu Dhabi}
\curraddr{}
\email{hsati@nyu.edu}
\thanks{}

\author{Urs Schreiber           }
\address{Mathematics Program and Center for Quantum and Topological Systems, New York University Abu Dhabi}
\curraddr{}
\email{us13@nyu.edu}
\thanks{}

\begin{abstract}
  We have previously observed that the theory of solutions of partial differential equations, regarded as \emph{diffieties} inside jet bundles, acquires a powerful comonadic formulation after passage from the category of Fr{\'e}chet smooth manifolds to the \emph{Cahiers topos} of \emph{formal smooth sets} (a well-adapted model for Synthetic Differential Geometry). However, the tacit assumption that this passage preserves the projective limits that define infinite jet bundles had remained unproven. Here we provide a detailed proof.
\end{abstract}

\date{\today}

\subjclass[2020]{
Primary: 
58A03, 
51K10, 
58A20 
Secondary: 
18F15, 
35A30 
}

\keywords{
  Synthetic differential geometry, 
  infinite jet bundles, 
  Cahiers topos, 
  formal smooth sets.
}

\maketitle

\setcounter{tocdepth}{1}
\tableofcontents

\section{Introduction}

The tenet of \emph{synthetic differential geometry} (SDG, cf. \parencites{Kock1979}{lavendhomme1996}{Kock2010}) is that concepts and results in differential geometry often find a more natural and powerful formulation after faithfully embedding the category of smooth manifolds into a suitable \emph{topos} (cf. \parencites{MacLaneMoerdijk1992}{Borceux1994III}) where \emph{infinitesimal spaces} exist as actual objects. 

\smallskip 
Such a ``well-adapted'' SDG topos is the \emph{Cahiers topos} due to E. Dubuc (\cite{Dubuc1979}, cf. \cite[\S 2]{GS26-FieldsII}), which we may think of as the category of \emph{formal smooth sets} (recalled in \cref{FormalSmoothSets} --- here ``formal'' is common jargon for ``infinitesimally thickened'' or ``haloed''). This is a differential-geometric analog of working with \emph{formal schemes} in algebraic geometry.

\smallskip 
One striking example is from the geometry of partial differential equations. On top of describing classical solutions, SDG can treat formal (power series) solutions of partial differential equations as primary objects. Classically, these formal solutions appeared in the guise of \emph{diffieties} inside infinite jet bundles (a very fruitful perspective due to \cite{Vinogradov1981}, brief background is recalled in \cref{OnJetBundles}). Their appearance in SDG turns out to become largely structural (namely \emph{comonadic}) when viewed in the synthetic context of formal (infinitesimally thickened) smooth sets. This is the result of \cite{KS26}. However, a key technical step in this identification had remained unproven, recalled as \cref{i!PreservesInfiniteJetBundles} below:

\smallskip 
Namely, with \emph{infinite jet bundles} \cref{InfiniteJetBundleInBackground} being projective limits of finite-dimensional manifolds, there are \emph{a priori} two different ways of regarding them among formal smooth sets: Either by first taking the limit (in Fr{\'e}chet manifolds) and then passing to formal smooth sets, or the other way around! 
We prove here that that these two constructions are naturally isomorphic.

This has crucial implications for applications in variational calculus and in Lagrangian field theory, cf Cor. \ref{MapsOutOfJetBundleAreLocallyOfFiniteOrder} below.

\smallskip 
The result may be ``intuitively expected'' to be true, but proving it is not entirely trivial. Here we provide a detailed and largely constructive proof. Besides filling this particular gap in the previous literature, our proof may be interesting for the technique it uses, which seems to be new, and may be adapted to other situations, cf. \cref{OnTheNatureOfRInfinityInFormalSmoothSets} below.

\section{Statement and Proof}

We write $i_!$ (\cref{ToposOfFormalSmoothSets}) for the inclusion of smooth sets --- such as Fr{\'e}chet manifolds (\cref{FrechetManifoldsAsSmoothSets}) --- among formal smooth sets.

\begin{theorem}
\label[theorem]{i!PreservesInfiniteJetBundles}
For $\inlinetikzcd{E \ar[r] \& \Sigma}$ a finite-dimensional surjective submersion of smooth manifolds, passage to formal smooth sets preserves the projective limit involved in forming its infinite jet bundle $J^\infty_\Sigma E$ \textup{(cf. \cref{OnJetBundles})}, in that there is a natural isomorphism:
\begin{equation}
  \label{i!PreservingInfiniteJetBundles}
  i_! 
  \Big(
    \varprojlim_{k \in \mathbb{N}} 
    J^k_\Sigma E 
  \Big)
    \;\simeq\; 
  \varprojlim_{k \in \mathbb{N}}
  \left(
    i_!  J^k_\Sigma E
  \right)
  \mathrlap{.}
\end{equation}
\end{theorem}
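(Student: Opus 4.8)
The plan is to build the canonical comparison morphism, reduce the statement by locality to a single assertion about countable powers of the affine line, and then prove that assertion by a direct analysis of the colimit (left Kan extension) that defines $i_!$ --- the decisive point being the finite nilpotency order of Weil algebras.

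\textbf{Step 1: reduction to a local model.} Recall (\cref{ToposOfFormalSmoothSets}) that $i_!$ is fully faithful and, being a left-exact left adjoint, preserves finite limits and all colimits. Applying it to the limit cone of $J^\infty_\Sigma E=\varprojlim_k J^k_\Sigma E$ therefore produces a canonical comparison morphism
\[
  \theta\;\colon\;i_!\big(\textstyle\varprojlim_k J^k_\Sigma E\big)\;\longrightarrow\;\varprojlim_k\,i_! J^k_\Sigma E,
\]
and it suffices to show $\theta$ is an isomorphism. Since this is a property of a morphism of formal smooth sets, it may be tested after restriction along a cover. Pick an atlas of $E$ by jet-adapted charts $V\cong\mathbb{R}^{\dim E}$; over each such $V$ one has a compatible family of diffeomorphisms $J^k_\Sigma E|_V\cong V\times\mathbb{R}^{m_k}$ (with $0=m_0\le m_1\le m_2\le\cdots$) under which every jet projection is $\mathrm{id}_V\times(\text{coordinate projection }\mathbb{R}^{m_{k+1}}\twoheadrightarrow\mathbb{R}^{m_k})$. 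Because forming the preimage along $J^k_\Sigma E\to E$ commutes with the limit over $k$ and with $i_!$ (the latter preserving the pullbacks and covering families that cut out open subobjects), $\theta$ restricts over each $i_!\big(J^\infty_\Sigma E|_V\big)$ to the comparison morphism for the tower $\{V\times\mathbb{R}^{m_k}\}_k$; and as $i_!$ preserves the product with the constant factor $V$ while binary products commute with limits, the whole theorem reduces to the assertion that
\[
  i_!\big(\mathbb{R}^{\mathbb{N}}\big)\;\longrightarrow\;\textstyle\prod_{\mathbb{N}}i_!\mathbb{R}
  \qquad\text{is an isomorphism in }\HaloedSmoothSets,
\]
where $\mathbb{R}^{\mathbb{N}}=\prod_{\mathbb{N}}\mathbb{R}$ formed in $\SmoothSets$ coincides with $\varprojlim_k\mathbb{R}^{m_k}$. (If the $m_k$ stabilize the limit is finite-dimensional and the claim is immediate, so we may assume $m_k\to\infty$.) Equivalently, since the reduced formal smooth sets are exactly the objects in the essential image of $i_!$: the countable power $\prod_{\mathbb{N}}i_!\mathbb{R}$ is \emph{reduced} --- this is the assertion the title refers to, and it is genuinely at issue because $i_!$ preserves finite powers of $\mathbb{R}$ but need not a priori preserve infinite ones.

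\textbf{Step 2: the computation on formal test spaces.} As limits of sheaves are computed argumentwise, it is enough to show that for every ``formal Cartesian space'' $\mathbb{R}^n\times\mathbb{D}_W$ --- that is, every object dual to a $C^\infty$-ring $C^\infty(\mathbb{R}^n)\otimes W$ with $W$ a Weil algebra --- the induced map of sections
\[
  \big(i_!\mathbb{R}^{\mathbb{N}}\big)(\mathbb{R}^n\times\mathbb{D}_W)\;\longrightarrow\;\big(C^\infty(\mathbb{R}^n)\otimes W\big)^{\mathbb{N}}
\]
is a bijection; since these objects generate $\HaloedSmoothSets$ and their nontrivial covering sieves are induced from ordinary open covers (along which $C^\infty(-)\otimes W$ is a sheaf and $(-)^{\mathbb{N}}$ is exact), once this holds the sheafification built into $i_!$ will automatically have changed nothing. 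By the pointwise formula for left Kan extension, the left-hand side is the colimit over the comma category of pairs $\big(\mathbb{R}^m,\,g\colon\mathbb{R}^n\times\mathbb{D}_W\to\mathbb{R}^m\big)$ of the sets $C^\infty(\mathbb{R}^m)^{\mathbb{N}}$, and the map above sends a representative $(\mathbb{R}^m,g,(h_j)_j)$ to $(g^\ast h_j)_j$, where $g^\ast\colon C^\infty(\mathbb{R}^m)\to C^\infty(\mathbb{R}^n)\otimes W$ is the corresponding $C^\infty$-ring homomorphism.

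\textbf{Step 3: surjectivity (easy), injectivity (the obstacle), and globalization.} For surjectivity, given $(\psi_j)_j$ I would set $d=\dim\mathfrak{m}_W$, take the ``Weil embedding'' $g_0\colon\mathbb{R}^n\times\mathbb{D}_W\to\mathbb{R}^{n+d}$ (the identity on $\mathbb{R}^n$ together with a chosen system of generators of $\mathfrak{m}_W$ on the infinitesimal factor, so that $g_0^\ast$ is surjective), and choose each $h_j\in C^\infty(\mathbb{R}^{n+d})$ to be a lift of $\psi_j$ polynomial in the last $d$ coordinates; crucially the \emph{one} pair $(\mathbb{R}^{n+d},g_0)$ realizes all of the $\psi_j$ simultaneously, so the infinitude of the index set causes no trouble. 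Injectivity is the hard step and is where I expect the new technique to lie: two representatives with the same image $(\psi_j)_j$ have to be joined by a zigzag in the comma category, and the device is to reparametrize the Cartesian-space factors by polynomial maps that exploit the finite nilpotency order $r$ of $W$ (substitutions built from $t\mapsto t^{r+1}$, adapted to the ideal $I$ presenting $W=\mathbb{R}[y]/I$). Such a reparametrization $\phi$ has the feature that $g^\ast\circ\phi^\ast=g^\ast$ on the functions that occur (the higher-order terms it introduces lying in $\ker g^\ast$), so it defines honest morphisms of the comma category that relocate a representative without altering its image; combining several such moves with the binary-product cocones of the comma category (which always exist) one identifies the two representatives. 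The remaining work is routine: verifying the zigzag identities, dispatching the general $W$ and general representatives, and finally gluing the local isomorphisms over the jet-adapted atlas via naturality of $\theta$ --- the affine-bundle structure of the jet tower having entered only through the existence of that atlas, in which the structure maps are literal coordinate projections, so that no separate handling of the (non-linear) bundle transition functions is needed.
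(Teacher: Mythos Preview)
Your overall architecture is essentially the paper's: reduce to showing that $\varprojlim_k i_!\mathbb{R}^k$ is reduced, unpack this via the coend formula for $i_!$, and observe that surjectivity of the comparison is easy (a single Weil embedding $U\times\mathbb{D}\hookrightarrow U\times\mathbb{R}^d$ lifts all components at once) while injectivity --- the zig-zag identification of two factorizations --- is the real content. On that much you and the paper agree.

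Two issues, one minor and one substantive.

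\emph{Minor.} You assert that $i_!$ is a left-exact left adjoint and use this to justify the local reduction (preservation of the pullbacks cutting out chart domains, and of the product with the factor $V$). The paper does not claim left-exactness of $i_!$ and does not need it: its local reduction (proof of \cref{i!PreservesInfiniteJetBundles}) works at the level of \emph{germs} of plots $U\times\mathbb{D}\to\varprojlim_k i_! J^k_\Sigma E$, using only that a germ lands in a trivializing chart of the submersion, so that the question becomes the one for $\mathbb{R}^\infty$. Your reduction can probably be repaired along these lines, but as written it rests on an unproven assertion.

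\emph{Substantive.} Your injectivity sketch is not a proof, and the mechanism you gesture at is not the one the paper uses. You propose reparametrizations ``built from $t\mapsto t^{r+1}$'' that fix $g$ while moving the $(h_j)$, combined with binary-product cocones. But a map $\phi$ on a fixed $\mathbb{R}^{n+d}$ has finitely many parameters, whereas you must simultaneously adjust countably many $h_j$; it is not clear any such $\phi$ (or finite zig-zag of them) can bring two arbitrary lifts together, and you give no construction. The paper's technique is different and concrete (\cref{TowardsFactoredMapsToRInfinityEquivalences}, \cref{FactoredMapsToRInfinityEquivalences}): given factorizations $(\iota,f)$ through $V$ and $(\iota',f')$ through $V'$ (both taken to be formal proper embeddings, \cref{FirstMapMayBeAssumedFormalEmbedding}), one sets $W:=V\times V'\times\mathbb{R}^n$, where $n$ is the number of chosen generators $h_1,\dots,h_n$ of the defining ideal of $\mathbb{D}\hookrightarrow\mathbb{R}^d$, defines $\alpha,\alpha'$ into $W$ so that the left triangles commute, and then constructs $\phi\colon W\to\mathbf{R}^\infty$ explicitly by
\[
  \phi\big((\vec t,\vec x),(\vec t{\,}',\vec x{\,}'),\vec j\,\big)
  \;=\;
  f(\vec t,\vec x)-f'(\vec t,0)+f'(\vec t,\vec x{\,}')
  +\textstyle\sum_i j_i\,\mu^i(\vec t\,),
\]
where the $\mu^i$ come from writing the difference $\delta(\vec t\,)=f'(\vec t,0)-f(\vec t,0)$ as $\sum_i h_i(\vec t\,)\mu^i(\vec t\,)$ via Hadamard's lemma (applied componentwise in $\mathbf{R}^\infty$, cf.\ \cref{OnTheNatureOfRInfinityInFormalSmoothSets}). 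The extra $\mathbb{R}^n$-factor carrying the values of the ideal generators, together with the Hadamard remainder, is precisely what makes both right triangles commute. This is the ``new technique'' you anticipated; your $t\mapsto t^{r+1}$ heuristic does not supply it.
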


By the characterization \parencites[Prop. 2.29]{KS26}[Cor. 3.6]{GS25-FieldsI} of smooth maps out of Fr{\'e}chet projective limit manifolds (``locally pro-manifolds''), \cref{i!PreservesInfiniteJetBundles} implies:
\begin{corollary}
\label[corollary]
 {MapsOutOfJetBundleAreLocallyOfFiniteOrder}
The maps of formal smooth sets from a synthetic jet bundle $J^\infty_\Sigma E$ to a finite-dimensional manifold $F$ is in canonical bijection with point-set smooth maps which are locally of finite jet order \textup{(as considered in \cite{Takens1979})}. 
\end{corollary}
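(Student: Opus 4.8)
The plan is to read off \cref{MapsOutOfJetBundleAreLocallyOfFiniteOrder} as a chain of three natural bijections, in which the only substantive ingredient is \cref{i!PreservesInfiniteJetBundles}; the remaining two are the full faithfulness of $i_!$ and the cited point-set description of maps out of a Fr\'echet pro-manifold.

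First I would spell out what the \emph{synthetic} jet bundle $J^\infty_\Sigma E$ is as a formal smooth set: following \cite{KS26} it is the projective limit $\varprojlim_{k\in\mathbb{N}} i_! J^k_\Sigma E$ formed inside $\HaloedSmoothSets$. By \cref{i!PreservesInfiniteJetBundles} this is naturally isomorphic to $i_!\big(\varprojlim_{k\in\mathbb{N}} J^k_\Sigma E\big) = i_! J^\infty_\Sigma E$, the image under $i_!$ of the honest Fr\'echet projective-limit manifold. The finite-dimensional target $F$ enters the statement as the formal smooth set $i_! F$.

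Next I would invoke the full faithfulness of $i_!\colon \SmoothSets \hookrightarrow \HaloedSmoothSets$ (\cref{ToposOfFormalSmoothSets}) to obtain the natural bijection
\[
  \HaloedSmoothSets\big( i_! J^\infty_\Sigma E,\, i_! F \big)
  \;\cong\;
  \SmoothSets\big( J^\infty_\Sigma E,\, F \big)
  \mathrlap{,}
\]
so that the maps of formal smooth sets under consideration are precisely the smooth-set morphisms out of the Fr\'echet manifold $J^\infty_\Sigma E$ into $F$. Finally, since $J^\infty_\Sigma E$ is a locally pro-manifold and $F$ is finite-dimensional, I would apply the characterization of \parencites[Prop.\ 2.29]{KS26}[Cor.\ 3.6]{GS25-FieldsI}, which identifies this last set with the point-set smooth maps $J^\infty_\Sigma E \to F$ that are locally of finite jet order in the sense of \cite{Takens1979} --- i.e.\ that factor, on a neighbourhood of each point, through some finite jet bundle $J^k_\Sigma E$. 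Concatenating the three bijections yields the asserted canonical bijection.

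Given \cref{i!PreservesInfiniteJetBundles}, none of this is hard; the points that must be stated carefully are: (i) that the synthetic $J^\infty_\Sigma E$ is by \emph{definition} the formal-smooth-set limit of the tower $\big(i_! J^k_\Sigma E\big)_{k}$ --- were it taken to mean $i_!$ of the Fr\'echet limit outright, the corollary would be content-free and \cref{i!PreservesInfiniteJetBundles} unused; (ii) that $i_!$ is fully faithful, which is where the structure of the Cahiers topos is actually used; and (iii) that the cited mapping-out theorem applies with the target an arbitrary finite-dimensional manifold $F$, the one place where finite-dimensionality of $F$ is genuinely needed. I expect no real obstacle here: the mathematical work, and the new technique advertised in the introduction, sits entirely inside \cref{i!PreservesInfiniteJetBundles}.
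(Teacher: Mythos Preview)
Your proposal is correct and matches the paper's approach exactly: the paper offers no separate proof for this corollary beyond the one-line preamble ``By the characterization \parencites[Prop.~2.29]{KS26}[Cor.~3.6]{GS25-FieldsI} of smooth maps out of Fr{\'e}chet projective limit manifolds, \cref{i!PreservesInfiniteJetBundles} implies:'', and you have simply unpacked that sentence into its three constituent bijections (\cref{i!PreservesInfiniteJetBundles}, full faithfulness of $i_!$, and the cited mapping-out result). Your care in point (i) about which definition of the synthetic jet bundle is in play is well placed and consistent with the paper's framing.
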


We prove \cref{i!PreservesInfiniteJetBundles} below after establishing a few lemmas, based on the following recognition principle for reduced objects among formal smooth sets:
\begin{lemma}
  \label[lemma]{RecognizingReducedObjects}
  An object $X \in \HaloedSmoothSets$ is \emph{reduced} \cref{SmthSetInsideFrmSmthSet} iff the following two conditions hold: 
  \begin{enumerate}
  \item All germs of maps out of a $U \times \mathbb{D} \in \HaloedCartesianSpaces$ into $X$ factor through some $\inlinetikzcd{V \in \CartesianSpaces}$:
  \begin{equation}
    \label{ReducedAsFactoringPlots}
    \begin{tikzcd}[
      column sep=30
    ]
      \mathllap{\forall\;}
      U \times \mathbb{D}
      \ar[
        rr,
        uphordown,
        "{ 
          \forall 
         }"{description}
      ]
      \ar[
        r, 
        dashed,
        "{ 
          \exists \, \iota
        }"{pos=.4}
      ]
      &
      V
      \ar[
        r, 
        dashed,
        "{ 
          \exists f
        }"{pos=.4}
      ]
      &
      X
      \mathrlap{\,.}
    \end{tikzcd}
  \end{equation}
  \item
  The equivalence relation on the set of such factorizations $(\iota,f)$ which is \emph{generated} by the relation
  \begin{equation}
    \label{GeneratingRelation}
    (\iota,f)
    \sim
    (\iota',f)
    \quad 
    \Leftrightarrow
    \quad 
    \begin{tikzcd}[row sep=-1pt]
      & 
      V
      \ar[
        dd,
        dashed,
        "{ \exists }"{swap, pos=.4}
      ]
      \ar[
        dr,
        "{ f }"
      ]
      \\
      U \times \mathbb{D}
      \ar[
        ur,
        "{ \iota }"
      ]
      \ar[
        dr,
        "{ \iota' }"{swap}
      ]
      &&
      X
      \\
      &
      V'
      \ar[
        ur,
        "{ f' }"{swap}
      ]
    \end{tikzcd}
  \end{equation}
  is that of equal composites, hence:
  \begin{equation}
   \label{ZigZigOfRelationsGivingEquivalence}
      (\iota,f)
      \sim
      \overset{\exists}{\cdots} 
      \sim
      (\iota',f')
    \;\;\;\;\;
    \Leftrightarrow
    \;\;\;\;\;
    f \circ \iota
    =
    f' \circ \iota'
    \mathrlap{\,.}
  \end{equation}
  \end{enumerate}
\end{lemma}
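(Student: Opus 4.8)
The plan is to reduce the claim to the characterization of reducedness from \cref{SmthSetInsideFrmSmthSet}: an object $X \in \HaloedSmoothSets$ is reduced exactly when, for every $U \times \mathbb{D} \in \HaloedCartesianSpaces$, the map $X(\mathrm{pr}_U) \colon X(U) \to X(U \times \mathbb{D})$ induced by the projection $\mathrm{pr}_U \colon U \times \mathbb{D} \to U$ is a bijection. The conditions of \cref{RecognizingReducedObjects} will then be matched, respectively, with surjectivity (condition \textbf{(i)}) and injectivity (condition \textbf{(ii)}) of $X(\mathrm{pr}_U)$. The single structural input I will lean on throughout is that every $V \in \CartesianSpaces$ is itself reduced --- equivalently, precomposition with $\mathrm{pr}_U$ is a bijection $\mathrm{Hom}(U, V) \xrightarrow{\ \sim\ } \mathrm{Hom}(U \times \mathbb{D}, V)$ --- so that every germ of a map $\iota \colon U \times \mathbb{D} \to V$ has a unique underlying smooth map $\overline{\iota} \colon U \to V$ with $\iota = \overline{\iota} \circ \mathrm{pr}_U$.

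\emph{From ``reduced'' to \textbf{(i)} and \textbf{(ii)}.} Given a germ of a plot $p \colon U \times \mathbb{D} \to X$, reducedness supplies a unique $q \in X(U)$ with $X(\mathrm{pr}_U)(q) = p$, so that $(\mathrm{pr}_U, q)$ is a factorization as in \eqref{ReducedAsFactoringPlots} with $V = U$; this is \textbf{(i)}. For \textbf{(ii)}, the implication ``generating relation $\Rightarrow$ equal composite'' is read off the triangles of \eqref{GeneratingRelation}, so the work is to connect any two factorizations $(\iota, f)$ and $(\iota', f')$ of one $p$ by a zig-zag. I would prove the stronger statement that an arbitrary factorization $(\iota, f)$ of $p$ is related to the canonical factorization $(\mathrm{pr}_U, q)$ by a \emph{single} instance of \eqref{GeneratingRelation}, with connecting map $\overline{\iota} \colon U \to V$: one has $\iota = \overline{\iota} \circ \mathrm{pr}_U$ by construction, while $f \circ \overline{\iota} = q$ because $X(\mathrm{pr}_U)(f \circ \overline{\iota}) = X(\iota)(f) = p = X(\mathrm{pr}_U)(q)$ and $X(\mathrm{pr}_U)$ is injective. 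Thus the factorizations of a fixed $p$ form a single class; since factorizations of distinct plots have distinct composites, the generated equivalence relation is exactly that of equal composites, which is \eqref{ZigZigOfRelationsGivingEquivalence}.

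\emph{From \textbf{(i)} and \textbf{(ii)} to ``reduced''.} The device I would introduce is the assignment $\Phi(\iota, f) := X(\overline{\iota})(f) \in X(U)$ on factorizations. It satisfies $X(\mathrm{pr}_U)\bigl(\Phi(\iota, f)\bigr) = X(\iota)(f) = f \circ \iota$, and it is invariant under the generating relation of \eqref{GeneratingRelation}: if $g \colon V \to V'$ witnesses $(\iota, f) \sim (\iota', f')$, then $\overline{\iota}' = g \circ \overline{\iota}$ by uniqueness of the factorization through $\mathrm{pr}_U$, so $\Phi(\iota', f') = X(\overline{\iota})\bigl(X(g)(f')\bigr) = X(\overline{\iota})(f) = \Phi(\iota, f)$. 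By \textbf{(ii)}, $\Phi(\iota, f)$ then depends only on the composite $f \circ \iota$. Injectivity of $X(\mathrm{pr}_U)$ follows: if $X(\mathrm{pr}_U)(q_0) = X(\mathrm{pr}_U)(q_1)$ then $(\mathrm{pr}_U, q_0)$ and $(\mathrm{pr}_U, q_1)$ have equal composite, so $q_0 = \Phi(\mathrm{pr}_U, q_0) = \Phi(\mathrm{pr}_U, q_1) = q_1$, using $\overline{\mathrm{pr}_U} = \mathrm{id}_U$. Surjectivity follows from \textbf{(i)}: a germ of a plot $U \times \mathbb{D} \to X$ factors as $f \circ \iota = X(\mathrm{pr}_U)\bigl(X(\overline{\iota})(f)\bigr)$, so $X(\mathrm{pr}_U)$ is surjective on germs, and together with injectivity the resulting local preimages agree on overlaps and glue, by the sheaf property of $X$, to a preimage over $U$. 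Hence $X(\mathrm{pr}_U)$ is bijective for all $U \times \mathbb{D}$, i.e.\ $X$ is reduced.

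\emph{Routine parts and the main obstacle.} The triangle-chases around \eqref{GeneratingRelation} and the local-to-global step in the surjectivity argument (covering the pairwise overlaps of a good cover of $U$ by Cartesian opens, applying injectivity there, and gluing) are routine. The genuinely load-bearing fact --- which I would isolate and cite carefully at the outset --- is that objects of $\CartesianSpaces$ are reduced, so a $V$-valued plot has a \emph{unique} underlying plot over $U$; the entire argument is then the transport of this uniqueness along the zig-zags of \eqref{GeneratingRelation} via the invariant $\Phi$. I expect the only real friction to be bookkeeping around the ``germ'' quantifier in \textbf{(i)}--\textbf{(ii)} against the global sheaf-theoretic gluing, which may make it cleanest to establish bijectivity of $X(\mathrm{pr}_U)$ first over arbitrarily small Cartesian neighborhoods and only then over a general $U$.
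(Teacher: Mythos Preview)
Your argument rests on a mischaracterization of reducedness. You assert at the outset that $X$ is reduced iff $X(\mathrm{pr}_U)\colon X(U)\to X(U\times\mathbb{D})$ is a bijection for every $U\times\mathbb{D}$, and in particular that for $V\in\CartesianSpaces$ precomposition with $\mathrm{pr}_U$ gives a bijection $\mathrm{Hom}(U,V)\xrightarrow{\ \sim\ }\mathrm{Hom}(U\times\mathbb{D},V)$. This is false: take $V=\mathbb{R}$, $U=\ast$, $\mathbb{D}=\mathbb{D}^1(1)$; then $\mathrm{Hom}(\ast,\mathbb{R})=\mathbb{R}$ while $\mathrm{Hom}(\mathbb{D}^1(1),\mathbb{R})=C^\infty(\mathbb{D}^1(1))=\mathbb{R}[\epsilon]/(\epsilon^2)$, which is strictly larger. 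Yet $\mathbb{R}$ is certainly reduced, since $i_!$ preserves representables \cref{i!PreservesRepresentables}. The condition you wrote down characterizes not the essential image of the \emph{left} adjoint $i_!$ but objects right-orthogonal to the projections $U\times\mathbb{D}\to U$ --- a ``coreduced'' condition associated with a further \emph{right} adjoint $i_*$, which is not what \cref{SmthSetInsideFrmSmthSet} says. Since the retraction $\overline{\iota}$ with $\iota=\overline{\iota}\circ\mathrm{pr}_U$ simply does not exist, the entire machinery of your proof (the invariant $\Phi$, the single-step zig-zag to $(\mathrm{pr}_U,q)$, the gluing argument) has nothing to stand on.

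The paper's proof is a different and much shorter argument: it directly unwinds the coend formula for left Kan extension, $(i_!\,i^\ast X)(U\times\mathbb{D})\simeq\int^{V}\mathrm{Hom}(U\times\mathbb{D},V)\times X(V)$, which by the very definition of a coend presents plots $U\times\mathbb{D}\to i_!\,i^\ast X$ as equivalence classes of pairs $(\iota,f)$ under precisely the relation \cref{GeneratingRelation}. The counit $i_!\,i^\ast X\to X$ sends $[(\iota,f)]\mapsto f\circ\iota$, so reducedness \cref{BeingReduced} amounts to this map being surjective (your condition \textbf{(i)}) and injective (your condition \textbf{(ii)}), with sheafification accounting for the passage to germs.
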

\begin{proof}  
  This is an immediate reformulation of the following \emph{coend} formula for the left Kan extension on presheaves (cf. \parencites[4.25]{Kelly1982}[(2.27)]{Loregian2021}):
  \begin{equation}
    \label{CoendFormulaFori!}
    (i_! X)(U \times \mathbb{D})
    \simeq
    \int^{V \in \CartesianSpaces}
\!\!    \HaloedCartesianSpaces (
      U \times \mathbb{D}
      ,\,
      V
    )
      \times
    X(V)
    \mathrlap{\,,}
  \end{equation}
  which (seen under the Yoneda lemma) says that maps $\inlinetikzcd{U \times \mathbb{D} \ar[r] \& i_! X}$ in $\mathrm{FrmSmthSet}$  are in natural bijection to equivalence classes of pairs consisting of a map $\inlinetikzcd{U \times \mathbb{D} \ar[r] \& V}$ (in $\inlinetikzcd{\mathrm{FrmCrtSp} \ar[r, hook] \& \mathrm{FrmSmthSet}}$) into a Cartesian space $\inlinetikzcd{V \in \mathrm{CrtSp} \ar[r, hook, "{i}"] \& \mathrm{FrmCrtSp}}$ and a map $\inlinetikzcd{V \ar[r] \& X}$ in $\mathrm{SmthSet}$, subject to the equivalence relation which is generated by compatible maps $\inlinetikzcd{\phi : V \ar[r] \& V'}$ (in $\mathrm{CartSp}$) between the factor spaces:
  $$
    \begin{array}{l}
    \left\{
      \inlinetikzcd{
        U \times \mathbb{D}
        \ar[r]
        \&
        i_! X
      }
    \right\}
    \! \simeq \!
    \Big\{\!
      \big(
      \inlinetikzcd{
        U \times \mathbb{D}
        \ar[r, "{ \iota }"]
        \&
        V
      },
      \inlinetikzcd{
        V
        \ar[r, "{ f }"]
        \&
        X
      }
      \big)
   \! \Big\}
    \Big/
    \Bigg\{\!\!
    \Bigg(
    \begin{tikzcd}[row sep=-2pt, 
      column sep=0pt
    ]
      &[15pt]
      V
      \ar[
        dd, 
        dashed,
        "{ \phi }"{swap}
      ]
      &
      V
      \ar[
        dd, 
        dashed,
        "{ \phi }"
      ]
      \ar[dr, "{ f }"{pos=.35}]
      &[15pt]
      \\
      U \!\times\! \mathbb{D}
      \ar[ur, "{ \iota }"]
      \ar[dr, "{ \iota' }"{swap}]
      &
      &
      &
      X
      \\
      & 
      V'\mathrlap{\,,}
      &
      V'
      \ar[ur, "{ f' }"{pos=.35,swap}]
    \end{tikzcd}
    \Bigg)
    \!\!\Bigg\}
    \mathrlap{\,,}
    \end{array}
  $$
  where on the right we mean that a pair as on the top of the diagram is in relation to a pair as shown at the bottom if $\phi$ exists that makes the two triangles commute.

  Under passage to sheaves \cref{i!AndSheafification} this statement holds on germs of maps, as claimed.
\end{proof}

The archetype of \cref{i!PreservesInfiniteJetBundles} is the statement that the projective limit defining the projectively infinite-dimensional Cartesian space (cf. \cite[Def. 2.24, Prop. 2.26]{KS26}),
\begin{equation}
  \label{InfiniteCartesianSpace}
  \mathbb{R}^\infty 
    := 
  \varprojlim\big(
    \begin{tikzcd}[
      column sep=18pt
    ]
      \ar[r, dotted, ->>]
      &
      \mathbb{R}^2
      \ar[r,->>]
      &
      \mathbb{R}^1
      \ar[r, ->>]
      &
      \mathbb{R}^0
    \end{tikzcd}
  \big)
  \in
  \begin{tikzcd}
    \mathrm{FrchtMfd}
    \ar[
     r, 
     hook,
     "{
       \scalebox{.6}{\cref{FrechetManifoldsFullyFaithfulInSmoothSets}}
     }"
    ]
    &
    \mathrm{SmthSet}
  \end{tikzcd}
  \mathrlap{\,,}
\end{equation}
is preserved:
\begin{proposition}
  \label[proposition]{IsRInfinityTheProjLimitInFrmSmthSet}
  Passage to haloed smooth sets preserves the projective limit involved in forming the infinite Cartesian space \cref{InfiniteCartesianSpace}:
  \begin{equation}
    \label{RInfinityAsTheProjLimitInFrmSmthSet}
  i_!
  \Big(
    \varprojlim_{k \in \mathbb{N}}
    \mathbb{R}^k
  \Big)
  \simeq
  \varprojlim_{k \in \mathbb{N}}
  \left(
    i_!
    \mathbb{R}^k
  \right)
  \mathrlap{.}
  \end{equation}
\end{proposition}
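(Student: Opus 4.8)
The plan is to deduce \cref{IsRInfinityTheProjLimitInFrmSmthSet} from the recognition principle \cref{RecognizingReducedObjects}, applied to the object $Y := \varprojlim_{k}\big(i_! \mathbb{R}^k\big)$ formed in $\HaloedSmoothSets$. The restriction functor $i^*$ (right adjoint to $i_!$, hence limit-preserving, and satisfying $i^* i_! \simeq \mathrm{id}$) sends this limit to $\varprojlim_k\big(i^* i_! \mathbb{R}^k\big) = \varprojlim_k \mathbb{R}^k$, which by \cref{InfiniteCartesianSpace} is the Fr\'echet manifold $\mathbb{R}^\infty$. So it will suffice to show that $Y$ is \emph{reduced} \cref{SmthSetInsideFrmSmthSet}: then $Y \simeq i_! i^* Y \simeq i_!\big(\varprojlim_k \mathbb{R}^k\big)$, and applying $i^*$ (which returns the identity of $\varprojlim_k \mathbb{R}^k$) together with full faithfulness of $i_!$ will identify this isomorphism with the canonical comparison map $i_!\big(\varprojlim_k\mathbb{R}^k\big) \to Y$. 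Throughout I write $\mathbb{D} = \mathrm{Spec}(W)$ for a Weil algebra $W$, and use that, limits of sheaves being computed objectwise, a map $U \times \mathbb{D} \to Y$ is a sequence $(g_j)_j$ with $g_j \in C^\infty(U \times \mathbb{D}) = C^\infty(U) \otimes W$, while a map $V \to Y$ from a Cartesian space $V$ is a sequence of elements of $C^\infty(V)$. Since an object of $\HaloedCartesianSpaces$ already has the form $U \times \mathbb{D}$ with $U \in \CartesianSpaces$, it is enough to verify conditions (i) and (ii) of \cref{RecognizingReducedObjects} for honest maps; the germ-level statements then follow a fortiori.

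\emph{Condition (i).} I would fix generators $\theta_1, \dots, \theta_s$ of the maximal ideal of $W$ and let $\iota_{\mathrm{std}} \colon U \times \mathbb{D} \to U \times \mathbb{R}^s$ be the monomorphism dual to the $C^\infty$-ring homomorphism $\pi \colon C^\infty(U \times \mathbb{R}^s) \to C^\infty(U) \otimes W$ that is the identity on the coordinates of $U$ and sends the $i$-th coordinate of $\mathbb{R}^s$ to $\theta_i$. This $\pi$ is \emph{surjective}: its image contains every $\psi \otimes \theta^\alpha$ with $\psi \in C^\infty(U)$ (a smooth function of the nilpotents $\theta_i$ being a polynomial in them, and the monomials $\theta^\alpha$ spanning $W$). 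Hence any map $U \times \mathbb{D} \to Y$, i.e.\ any sequence $(g_j)_j$, lifts termwise along $\pi$ to a sequence $(\widehat{g}_j)_j$ in $C^\infty(U \times \mathbb{R}^s)$, which is a map $f \colon U \times \mathbb{R}^s \to Y$ with $f \circ \iota_{\mathrm{std}} = (g_j)_j$; as $U \times \mathbb{R}^s \in \CartesianSpaces$, this is the required factorization.

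\emph{Condition (ii).} The inclusion ``$\subseteq$'' (the generated relation refines equality of composites) is immediate from the commuting triangles in \cref{GeneratingRelation}. For ``$\supseteq$'', I would first observe that surjectivity of $\pi$ lets one write \emph{any} $\iota \colon U \times \mathbb{D} \to V$ as $\iota = c \circ \iota_{\mathrm{std}}$ for a suitable $c \colon U \times \mathbb{R}^s \to V$, so that $(\iota, f)$ and $(\iota_{\mathrm{std}}, f \circ c)$ are related via $c$. Thus two factorizations $(\iota, f), (\iota', f')$ of one map $p$ are equivalent, respectively, to $(\iota_{\mathrm{std}}, g)$ and $(\iota_{\mathrm{std}}, h)$ with $g \circ \iota_{\mathrm{std}} = h \circ \iota_{\mathrm{std}} = p$, and it remains to connect these. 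The key structural input is that $K := \ker(\pi)$ is \emph{finitely generated}: by the relative Taylor (Hadamard) formula it is generated, as an ideal of $C^\infty(U \times \mathbb{R}^s)$, by finitely many polynomials $\kappa_1, \dots, \kappa_r$ in the $\mathbb{R}^s$-coordinates alone (polynomial generators of the defining ideal of $W$, together with the monomials of suitable degree). Writing $g_j - h_j = \sum_{i=1}^{r} \kappa_i\, c_{ij}$ with $c_{ij} \in C^\infty(U \times \mathbb{R}^s)$, I would put $V_+ := U \times \mathbb{R}^s \times \mathbb{R}^r$ with new coordinates $\tau_1, \dots, \tau_r$, take $\alpha, \alpha' \colon U \times \mathbb{R}^s \to V_+$ to be the sections given by $\tau_i \mapsto 0$ and by $\tau_i \mapsto \kappa_i$ respectively, and take $\widetilde{f} \colon V_+ \to Y$ with $j$-th component $g_j - \sum_i \tau_i\, c_{ij}$. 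Since each $\kappa_i \in K$, the $\tau_i$-components of $\alpha \circ \iota_{\mathrm{std}}$ and $\alpha' \circ \iota_{\mathrm{std}}$ both vanish, so $\alpha \circ \iota_{\mathrm{std}} = \alpha' \circ \iota_{\mathrm{std}}$; and by construction $\widetilde{f} \circ \alpha = g$ and $\widetilde{f} \circ \alpha' = h$. Hence $(\iota_{\mathrm{std}}, g)$ and $(\iota_{\mathrm{std}}, h)$ are each related — via $\alpha$ and $\alpha'$ — to the single factorization $(\alpha \circ \iota_{\mathrm{std}}, \widetilde{f})$, and so are equivalent.

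The hard part will be exactly this last interpolation in condition (ii): condition (i) and the reduction to $\iota_{\mathrm{std}}$ are routine, but joining two factorizations with the same composite requires the idea of adjoining one affine coordinate per generator of $K$ and ``steering'' between the two competing lifts by a single smooth map out of the enlarged Cartesian space $V_+$ — along with the Weil-algebra bookkeeping identifying $K$ as a finitely generated ideal with polynomial generators pulled back from $\mathbb{R}^s$. The same scheme should then prove \cref{i!PreservesInfiniteJetBundles}, replacing the tower $(\mathbb{R}^k)_k$ by $(J^k_\Sigma E)_k$ and the affine coordinates on $\mathbb{R}^s$ by local jet coordinates, the Weil-algebra part being unchanged.
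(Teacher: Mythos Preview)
Your argument is correct and shares the paper's overall architecture --- show that $\varprojlim_k(i_!\mathbb{R}^k)$ is reduced via \cref{RecognizingReducedObjects}, then conclude formally by adjunction --- but your treatment of condition~(ii) takes a genuinely different and more elementary route. The paper first replaces each factorization by one whose $\iota$ is a formal \emph{proper} embedding of high codimension (\cref{FirstMapMayBeAssumedFormalEmbedding}) and then, in order to handle the $U$-direction uniformly, invokes a rectification result (\cref{RectifyingProperEmbeddingsOfCartesianSpaces}) resting on ambient-isotopy theorems from differential topology; this is what forces the ``sufficiently high codimension'' hypothesis in \cref{FactoredMapsToRInfinityEquivalences}. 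Only after both embeddings have been straightened does the paper build the interpolating space $W = V \times V' \times \mathbb{R}^n$ and the map $\phi$. You bypass all of this by observing that surjectivity of $\pi$ already lets you normalize \emph{every} factorization, in a single generating coend step, to the fixed standard embedding $\iota_{\mathrm{std}}\colon U\times\mathbb{D}\hookrightarrow U\times\mathbb{R}^s$; the interpolation then takes place over the smaller space $U\times\mathbb{R}^s\times\mathbb{R}^r$ with no topological input whatsoever, the only analytic ingredient being Hadamard's lemma used to exhibit $\ker\pi$ as the ideal generated by finitely many polynomials in the $\mathbb{R}^s$-variables. What the paper's approach buys is a symmetric geometric picture in which two arbitrary embeddings are compared directly; what yours buys is a shorter, purely algebraic argument with no codimension constraint and no appeal to isotopy extension.
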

\begin{proof}
  It will suffice to observe that
  the right-hand side of \cref{RInfinityAsTheProjLimitInFrmSmthSet} is reduced (proved below in \cref{LimRkIsReduced}),
  whence (using~\cref{IExciAst}):
  \begin{equation}
  \varprojlim_{k \in \mathbb{N}}
  \left(
    i_!  \mathbb{R}^k
  \right)
  =
  \reduced \varprojlim_{k \in \mathbb{N}}
  \left(
    i_!  \mathbb{R}^k
  \right)
  \simeq
  i_!
  \,
  i^\ast
  \varprojlim_{k \in \mathbb{N}}
  \left(
    i_!  \mathbb{R}^k
  \right)
  .
 \end{equation}
 With that and by full faithfulness of $i_!$ (\cref{PropertiesOfSmthSetInsideFrmSmthSet}), the claim is equivalent to:
 \begin{equation}
   \varprojlim_{k \in \mathbb{N}}
   \mathbb{R}^k
   \simeq
   i^\ast
   \bigg(
     \varprojlim_{k \in \mathbb{N}}
     \big(
       i_!  \mathbb{R}^k
     \big)
  \! \bigg)
   \mathrlap{.}
 \end{equation}
 But this holds since $i^\ast$ commutes with (projective) limits (being a right adjoint) and then using \cref{iAstIExc}.
\end{proof}

Hence the key is to see that:
\begin{lemma}
\label[lemma]{LimRkIsReduced}
  The limiting object
  $
    \varprojlim_k
    (
      i_!
      \mathbb{R}^k
    )
  $
  is reduced \cref{SmthSetInsideFrmSmthSet}.
\end{lemma}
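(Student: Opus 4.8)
The plan is to verify directly the two conditions of the recognition principle \cref{RecognizingReducedObjects} for the object $X := \varprojlim_{k}\bigl(i_!\mathbb{R}^k\bigr)$. The first ingredient is a concrete description of its plots: since projective limits of sheaves are computed objectwise, a germ of a map $U\times\mathbb{D}\to X$ out of a haloed Cartesian space---say $\mathbb{D}$ has associated Weil algebra $W$---is the same thing as a compatible tower of germs of plots $U\times\mathbb{D}\to i_!\mathbb{R}^k$ over a common neighbourhood $U'\subseteq U$ of the base point. As each $i_!\mathbb{R}^k$ is the image in $\HaloedSmoothSets$ of a finite-dimensional Cartesian space, such a tower is recorded by a single sequence of elements of $C^\infty(U')\otimes W$, i.e. by a sequence of polynomials in the Weil coordinates of $W$ smoothly parametrised by $U'$.

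Condition (i) is then supplied by the ``Weil coordinate space''. Writing $\mathbb{D}\hookrightarrow\mathbb{R}^m$ (for suitable $m$) for the closed embedding realising $W$ as a $C^\infty$-ring quotient of $C^\infty(\mathbb{R}^m)$, and $\iota_{\mathrm{std}}\colon U'\times\mathbb{D}\hookrightarrow U'\times\mathbb{R}^m$ for the induced inclusion into this finite-dimensional Cartesian space, one picks polynomial representatives for each level of the tower above---compatibly with the bonding projections $\mathbb{R}^{k+1}\twoheadrightarrow\mathbb{R}^k$---and assembles them, via the universal property of the limit, into a morphism $f\colon U'\times\mathbb{R}^m\to X$ with $f\circ\iota_{\mathrm{std}}$ the given germ. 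The same trick (writing the components of an arbitrary source map as polynomials) shows that \emph{any} factorisation $U\times\mathbb{D}\xrightarrow{\iota}V\xrightarrow{f}X$ of a germ is related, by a single instance of the generating relation \cref{GeneratingRelation}, to one of ``standard shape'' $(\iota_{\mathrm{std}},h)$ with $h\colon U'\times\mathbb{R}^m\to X$; this is what makes condition (ii) tractable at all.

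Condition (ii) is the crux. Bringing two factorisations of a germ $g$ to standard shape $(\iota_{\mathrm{std}},h)$ and $(\iota_{\mathrm{std}},h')$, the equality $h\circ\iota_{\mathrm{std}}=g=h'\circ\iota_{\mathrm{std}}$ forces the component-wise difference of $h$ and $h'$ to lie in the finitely generated ideal $J$ cutting out $U'\times\mathbb{D}$ inside $U'\times\mathbb{R}^m$; splitting along generators of $J$ reduces the task to the single-generator case $h'=h+q\cdot s$, where $q$ is a polynomial pulling back to $0$ along $\iota_{\mathrm{std}}$ and $s\colon U'\times\mathbb{R}^m\to X$ is an arbitrary plot. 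One then interpolates: the morphism $\bar h\colon\mathbb{R}\times U'\times\mathbb{R}^m\to X$, $(t,u,v)\mapsto h(u,v)+t\,q(v)\,s(u,v)$, has slices $\bar h(0,-,-)=h$ and $\bar h(1,-,-)=h'$, and---because $q$ pulls back to $0$---every slice $\bar h(t,-,-)$ restricts along $\iota_{\mathrm{std}}$ to $g$. Inserting the constants $t=0$ and $t=1$ therefore relates $(\iota_{\mathrm{std}},h)$ and $(\iota_{\mathrm{std}},h')$ to two factorisations $(\iota^{0},\bar h)$ and $(\iota^{1},\bar h)$ through the common Cartesian space $\mathbb{R}\times U'\times\mathbb{R}^m$, whose source maps differ only by sitting over $t=0$ versus $t=1$. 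The final---and genuinely delicate---step is to connect $(\iota^{0},\bar h)$ and $(\iota^{1},\bar h)$ by a zig-zag of the generating relation. This is where a careful, constructive argument (the technique that ``seems to be new'') is required: the obvious candidates---translating or reflecting the interpolation coordinate---do not preserve $\bar h$, so one must instead route through further finite-dimensional Cartesian spaces, exploiting that $\bar h$ is constant in $t$ along the formal locus together with a suitable normal form for $J$ (for instance that $J$ contains a power of the maximal ideal at the origin, whose monomial generators polarise into powers of linear forms, whose vanishing loci are coordinate subspaces through which $\iota_{\mathrm{std}}$ can be routed).

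I expect this last step of condition (ii)---sliding the source map of a standard factorisation while keeping the target map fixed---to be the main obstacle; once that zig-zag is available, everything preceding it (the recognition principle itself, the objectwise description of the plots of $X$, the factorisation proving condition (i), and the reduction to standard shape) is essentially bookkeeping.
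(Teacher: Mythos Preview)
Your setup via \cref{RecognizingReducedObjects}, your treatment of condition (i), and your reduction of arbitrary factorisations to a ``standard shape'' $(\iota_{\mathrm{std}},h)$ all track the paper's argument, and you correctly isolate condition (ii) as the heart of the matter. The gap is exactly where you place it: the interpolation $\bar h(t,-,-)$ does not close the argument, because relating $(\iota^0,\bar h)$ to $(\iota^1,\bar h)$ is another instance of the very problem you are trying to solve. The two source maps land in disjoint slices of $\mathbb{R}\times U'\times\mathbb{R}^m$, the target map $\bar h$ is not constant in $t$ away from the formal locus, and your gestures toward polarising monomial generators of $J$ and routing $\iota_{\mathrm{std}}$ through coordinate subspaces are not developed into an argument. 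As written, the proposal stops short of a proof precisely at the step that carries all the content.

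The paper avoids this circularity by \emph{not} trying to make the two source maps coincide. It keeps $\iota\colon U\times\mathbb{D}\hookrightarrow V$ and $\iota'\colon U\times\mathbb{D}\hookrightarrow V'$ as formal proper embeddings into possibly different Cartesian spaces and builds the zig-zag in one step through $W := V\times V'\times\mathbb{R}^n$, where $n$ is the number of generators $(h_i)$ of the ideal cutting out $\mathbb{D}\subset\mathbb{R}^d$. After splitting $V\simeq\mathbb{R}^d\times V_\perp$ along the image of $\iota$ (and similarly for $V'$), the commutativity of the outer square forces $\delta(\vec t\,) := f'(\vec t,0) - f(\vec t,0)$ to lie in the ideal $(h_i)$, so $\delta = \sum_i h_i\,\mu^i$ for smooth $\mu^i$. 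One then sets $\alpha(\vec t,\vec x) := \bigl((\vec t,\vec x),(\vec t,0),0\bigr)$, $\alpha'(\vec t\,',\vec x\,') := \bigl((\vec t\,',0),(\vec t\,',\vec x\,'),\vec h(\vec t\,')\bigr)$, and
\[
  \phi\bigl((\vec t,\vec x),(\vec t\,',\vec x\,'),\vec j\,\bigr)
  := f(\vec t,\vec x) - f'(\vec t,0) + f'(\vec t,\vec x\,') + \textstyle\sum_i j_i\,\mu^i(\vec t\,).
\]
A direct check gives $\alpha\circ\iota = \alpha'\circ\iota'$ (since each $h_i$ vanishes on $\mathbb{D}$), $\phi\circ\alpha = f$, and $\phi\circ\alpha' = f'$. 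This is the required length-two zig-zag; the extra $\mathbb{R}^n$-coordinates $\vec j$, fed by the ideal generators on the $\alpha'$ side and by zero on the $\alpha$ side, are what absorb the discrepancy that your interpolation parameter $t$ could not. This explicit construction is the ``new technique'' the paper alludes to, and it is what your proposal is missing.
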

\begin{proof}
  In view of \cref{RecognizingReducedObjects}, 
  we first show that every top map of the following form factors through some $V \in \CartesianSpaces$ as shown at the bottom:
  $$
    \begin{tikzcd}
    U \times \mathbb{D}
    \ar[r, dashed]
    \ar[
      rr,
      uphordown,
    ]
    &
    V
    \ar[r, dashed]
    &
    \displaystyle{%
      \varprojlim_{k \in \mathbb{N}}
    }
    \big(
      i_!
      \mathbb{R}^k
    \big)
    \mathrlap{\,.}
    \end{tikzcd}
  $$
  Since we are dealing with maps into a limit, we need to equivalently see that every commuting diagram of maps of the form
  $$
    \begin{tikzcd}[row sep=5pt, column sep=30pt]
      &&
      U \!\times\! \mathbb{D}
      \ar[ddll, dotted]
      \ar[ddl]
      \ar[dd]
      \ar[ddr]
      \ar[ddrr]
      \\
      \\
      {\phantom{\mathbb{R}^n}}
      \ar[r, dotted, ->>]
      &
      i_! \mathbb{R}^3
      \ar[r, ->>]
      &
      i_! \mathbb{R}^2
      \ar[r, ->>]
      &
      i_! \mathbb{R}^1
      \ar[r, ->>]
      &
      i_! \mathbb{R}^0
    \end{tikzcd}
  $$
  factors in this form:
  $$
    \begin{tikzcd}[row sep=10pt, column sep=30pt]
      &&
      U \!\times\! \mathbb{D}
      \ar[d]
      \\
      &&
      V
      \ar[dll, dotted]
      \ar[dl]
      \ar[d]
      \ar[dr]
      \ar[drr]
      \\
      {\phantom{\mathbb{R}^n}}
      \ar[r, dotted, ->>]
      &
      i_! \mathbb{R}^3
      \ar[r, ->>]
      &
      i_! \mathbb{R}^2
      \ar[r, ->>]
      &
      i_! \mathbb{R}^1
      \ar[r, ->>]
      &
      i_! \mathbb{R}^0
      \mathrlap{\,.}
    \end{tikzcd}
  $$
  Now, a map
  $\inlinetikzcd{U \!\times\! \mathbb{D} \ar[r] \& i_! \mathbb{R}^k}$ in $\HaloedSmoothSets$ is just a morphism in $\HaloedCartesianSpaces$ of the form $\inlinetikzcd{U \!\times\! \mathbb{D} \ar[r] \& \mathbb{R}^k}$ (because $i_!$ preserves representables, cf. \cref{PropertiesOfSmthSetInsideFrmSmthSet}), hence is a $k$-tuple $\left(\left[f_i\right]\right)_{i = 1}^k$ of elements of $C^\infty(U \!\times\! \mathbb{D})$. By \cref{FormalCartesianSpaces}, every such element $[f_i]$ is an equivalence class of an element $f_i \in C^\infty(U \!\times\! \mathbb{R}^d)$, for some $d \in \mathbb{N}$ depending only on $\mathbb{D}$. Hence, if we take $V := U \!\times\! \mathbb{R}^d$ then we may choose these $f_i$ for each given $[f_i]$:
  $$
    \begin{tikzcd}[
      column sep=35pt,
      row sep=25pt
    ]
      & 
      U \!\times\! \mathbb{D}
      \ar[
        d,
        hook
      ]
      \ar[
        ddl,
        bend right=20,
        "{
          ([f_i])_{i=1}^{k+1}
        }"{swap}
      ]
      \ar[
        ddr,
        bend left=20,
        "{
          ([f_i])_{i=1}^{k}
        }"
      ]
      \\[-10pt]
      &
      U \!\times\! \mathbb{R}^d
      \ar[
        dl,
        "{
          (f_i)_{i=1}^{k+1}
        }"{description}
      ]
      \ar[
        dr,
        "{
          (f_i)_{i=1}^{k}
        }"{description}
      ]
      \\
      \mathbb{R}^{k+1}
      \ar[rr, ->>]
      &&
      \mathbb{R}^k
    \end{tikzcd}
  $$
  to achieve the desired factorization.

  In order to conclude with \cref{RecognizingReducedObjects}, we still need to show that any two such factorizations of the same map are related by a compatible zig-zag \cref{ZigZigOfRelationsGivingEquivalence} of maps between the intermediate $V$s. This follows from:  (i) the evident observation that every factorization is related to one where $\iota$ is a formal proper embedding of arbitrarily high codimension (\cref{FirstMapMayBeAssumedFormalEmbedding}), 
  combined with (ii) the more laborious verification that any two of these whose composites are equal are in zig-zag relation to each other. This is finally shown as \cref{FactoredMapsToRInfinityEquivalences} below.
\end{proof}

With this we conclude:
\begin{proof}[Proof of \cref{i!PreservesInfiniteJetBundles}]
  The key is to observe that the right hand side of \cref{i!PreservingInfiniteJetBundles} is reduced, in that
  $$
    \varprojlim_{k \in \mathbb{N}}
    \left(
      i_!  J^k_\Sigma E
    \right)
    \simeq
    i_! \, i^\ast
    \varprojlim_{k \in \mathbb{N}}
    \left(
      i_!  J^k_\Sigma E
    \right)
    ,
  $$
  from which the claim will follow formally, just as in the proof of \cref{IsRInfinityTheProjLimitInFrmSmthSet}:
  Since $i_!$ is fully faithful, the desired condition \cref{i!PreservingInfiniteJetBundles} thereby becomes equivalent to:
  $$
    \varprojlim_{k \in \mathbb{N}}
    \left(
      J^k_\Sigma E
    \right)
    \simeq
    i^\ast
    \varprojlim_{k \in \mathbb{N}}
    \left(
      i_!  J^k_\Sigma E
    \right)
    \mathrlap{,}
  $$
  which holds, since $i^\ast$ commutes with projective limits (being a right adjoint) and then using that $i^\ast \circ i_! \simeq \mathrm{id}$ \cref{iAstIExc}.

  To see that the right-hand side of \cref{i!PreservingInfiniteJetBundles} is indeed reduced, consider a dashed horizontal map of the form
  $$
    \begin{tikzcd}[row sep=15pt]
      U \times \mathbb{D}
      \ar[r, dashed]
      \ar[dr, dashed]
      &
      \varprojlim_{k}
      i_! J^k_\Sigma E
      \ar[d, ->>]
      \\
      & 
      i_! \Sigma
      \mathrlap{\,,}
    \end{tikzcd}
  $$
  where at the bottom we are indicating the induced map to $\Sigma$, over which everything is fibered. By \cref{RecognizingReducedObjects} we need to show that a germ of the horizontal dashed map factors through a Cartesian space. By the fact that $\inlinetikzcd{E \ar[r] \& \Sigma}$ is assumed to be a surjective submersion, and hence similarly all of $J^k_\Sigma E\rightarrow J^{k-1}_\Sigma E\rightarrow \cdots \rightarrow \Sigma$, there is a representative of the germ of each 
  $U\times \mathbb{D} \rightarrow i_! J^k_\Sigma E$ which factors through a trivial fibration $U^k\times \mathbb{D} \hookrightarrow i_! J^k_{\mathbb{R}^n} \mathbb{R}^{n+m}$. But upon further restriction to common overlaps, these yield a representative for the germ of  the original map $U\times \mathbb{D} \rightarrow \varprojlim_{k}
      i_! J^k_\Sigma E$ (with domain to be denoted $\inlinetikzcd{U^{'} \ar[r, hook] \& U}$) which factors through the corresponding jet bundle of a trivial fibration:
  $$
    \begin{tikzcd}[
      row sep=15pt, column sep=large
    ]
      U' \times \mathbb{D}
      \ar[r, dashed]
      \ar[dr, dashed]
      \ar[ddr, dashed]
      &
      \varprojlim_{k}
      i_! J^k_{\mathbb{R}^n} \mathbb{R}^{n+m}
      \ar[r, hook]
      \ar[d, ->>]
      &[-9pt]
      \varprojlim_{k}
      i_! J^k_\Sigma E
      \ar[d, ->>]
      \\
      & 
      i_! \mathbb{R}^{n+m}
      \ar[r, hook]
      \ar[d, ->>]
      \ar[
        dr,
        phantom,
        "{ \lrcorner }"{pos=.1}
      ]
      &
      i_! E
      \ar[d, ->>]
      \\
      &
      i_! \mathbb{R}^n
      \ar[r, hook]
      &
      i_! \Sigma
      \mathrlap{\,.}
    \end{tikzcd}
  $$
  Thereby we have reduced the situation to essentially that of \cref{IsRInfinityTheProjLimitInFrmSmthSet} and the proof follows analogously. 
\end{proof}

It just remains to prove the very last statement in the proof of \cref{LimRkIsReduced}. This is the step that requires most of the work, we prove it as \cref{FactoredMapsToRInfinityEquivalences} below.

\begin{notation}
For ease of notation we shall now abbreviate the ``synthetic differential version'' of $\mathbb{R}^\infty$ \cref{InfiniteCartesianSpace} as
\begin{equation}
  \label{RInfinityInFormalSmoothSets}
  \mathbf{R}^\infty
  :=
  \varprojlim_k i_! \mathbb{R}^k
  \in
  \HaloedSmoothSets
\end{equation}
(though once we are done with the proof of \cref{FactoredMapsToRInfinityEquivalences}, \cref{IsRInfinityTheProjLimitInFrmSmthSet} asserts that $\mathbf{R}^\infty$ is isomorphic to $i_! \mathbb{R}^{\infty}$). 
\end{notation}
\begin{remark}
\label[remark]
  {OnTheNatureOfRInfinityInFormalSmoothSets}
Since $i_! \mathbb{R}^k = \mathbb{R}^k \in \HaloedCartesianSpaces$ \cref{i!PreservesRepresentables}, the plots of $\mathbf{R}^\infty$ \cref{RInfinityInFormalSmoothSets} are nothing but $\mathbb{N}$-tuples of smooth functions:
$$
  \begin{aligned}
  \mathrm{Hom}(
    U \times \mathbb{D}
    ,\,
    \mathbf{R}^\infty
  )
  & 
  \simeq
  \varprojlim_{k \in \mathbb{N}}
  \mathrm{Hom}(
    U \times \mathbb{D}
    ,\,
    \mathbb{R}^k
  )
  \simeq
  \prod_{k \in \mathbb{N}}
  \mathrm{Hom}(
    U \times \mathbb{D}
    ,\,
    \mathbb{R}^1
  )
  \\
  & \simeq 
  \prod_{k \in \mathbb{N}}
  C^\infty(U \times \mathbb{D})
  \mathrlap{\,.}
  \end{aligned}
$$
This means in particular that we may add and subtract among these plots (used in \cref{DefOfPhiForD1Case,DefinitionOfPhi} below) and apply Hadamard's lemma to them (used in \cref{HadamardRemainderOfDeltaForD1Case,HadamardRemainderOfDelta} below).

In fact, in the following proof we only use that elements of $C^\infty(U,\mathbf{R}^\infty)$ (for $U \in \mathrm{CrtSp}$) are additive and subject to Hadamard's lemma, and hence the analogous argument applies in case $\mathbf{R}^\infty$ is replaced by any other object for which this is the case.
\end{remark}

\begin{lemma}
\label[lemma]
  {FactoredMapsToRInfinityEquivalences}
Given an outer commuting diagram as follows:
$$
  \begin{tikzcd}[
    row sep=12pt, column sep=40pt
  ]
    &
    V
    \ar[
      dr,
      "{ f }"
    ]
    \ar[
      d,
      dashed,
      "{ \alpha }"
    ]
    \\
    U \times \mathbb{D}
    \ar[
      ur,
      "{ \iota }"
    ]
    \ar[
      dr,
      "{ \iota' }"{swap}
    ]
    \ar[
      r,
      dashed
    ]
    &
    W
    \ar[
      r,
      dashed,
      "{ \phi }"{description, pos=.45}
    ]
    &
    \mathbf{R}^\infty
    \mathrlap{,}
    \\
    &
    V'
    \ar[
      ur,
      "{ f' }"{swap}
    ]
    \ar[
      u,
      dashed,
      "{ \alpha' }"{swap}
    ]
  \end{tikzcd}
$$
where $\iota$ and $\iota'$ are formal proper embeddings \textup{(\cref{FormalEmbedding})} of sufficiently high codimension,
there exists $W \in \CartesianSpaces$ such that the diagram may be filled by dashed maps as shown, making all triangles commute.
\end{lemma}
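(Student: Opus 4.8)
The plan is to build $W$, $\phi$, $\alpha$, $\alpha'$ and the unlabelled map $j\colon U\times\mathbb{D}\to W$ completely by hand. Recall from \cref{OnTheNatureOfRInfinityInFormalSmoothSets} that a map out of a Cartesian space into $\mathbf{R}^\infty$ is just an $\mathbb{N}$-tuple of smooth functions, so write $f=(f_k)_{k\in\mathbb{N}}$, $f_k\in C^\infty(V)$, and $f'=(f'_k)_k$, $f'_k\in C^\infty(V')$; commutativity of the outer square says precisely $\iota^\ast f_k=\iota'^\ast f'_k$ in $C^\infty(U\times\mathbb{D})$ for all $k$. Let $I_\iota:=\ker\big(\iota^\ast\colon C^\infty(V)\to C^\infty(U\times\mathbb{D})\big)$; since $\iota$ is a formal \emph{proper} embedding, $\iota^\ast$ is surjective and $I_\iota$ contains a power of the vanishing ideal of the reduced image, hence is finitely generated, say $I_\iota=\langle q_1,\dots,q_m\rangle$. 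Pick moreover \emph{any} smooth $\beta\colon V\to V'$ with $\beta\circ\iota=\iota'$ and \emph{any} smooth $\beta'\colon V'\to V$ with $\beta'\circ\iota'=\iota$ --- these exist because $\iota,\iota'$ are formal embeddings, so $\beta$ and $\beta'$ may be prescribed freely on coordinate functions. Then set $W:=V\times V'\times\mathbb{R}^m\in\CartesianSpaces$ and
$$
  \alpha:=\big(\mathrm{id}_V,\ \beta,\ (q_1,\dots,q_m)\big),
  \qquad
  \alpha':=\big(\beta',\ \mathrm{id}_{V'},\ 0\big),
  \qquad
  j:=(\iota,\iota',0)\colon U\times\mathbb{D}\to W.
$$
Each of $\alpha,\alpha'$ has a coordinate block equal to an identity, so both are \emph{closed} embeddings; thus $\ker\alpha^\ast$ and $\ker\alpha'^\ast$ are the honest vanishing ideals of their images, with the evident generators $\{y_\bullet-\beta_\bullet(x),\,z_\bullet-q_\bullet(x)\}$ resp.\ $\{x_\bullet-\beta'_\bullet(y),\,z_\bullet\}$ in coordinates $(x,y,z)$ on $W$. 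Using $\iota^\ast q_\ell=0$ together with $\beta\iota=\iota'$, $\beta'\iota'=\iota$ one reads off at once that $\alpha\circ\iota=j=\alpha'\circ\iota'$.

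The point of inserting the factor $\mathbb{R}^m$ and of ``planting'' the generators $q_\ell$ of $I_\iota$ into $\alpha$ (while zeroing out the matching block of $\alpha'$) is to pin the coincidence locus of $\alpha$ and $\alpha'$ down to exactly $j(U\times\mathbb{D})$; concretely, the one substantive step of the proof is the ideal identity
$$
  \ker\big(j^\ast\colon C^\infty(W)\to C^\infty(U\times\mathbb{D})\big)
  \;=\;
  \ker\alpha^\ast+\ker\alpha'^\ast .
$$
Here ``$\supseteq$'' is immediate because $j$ factors both through $\alpha$ and through $\alpha'$. For ``$\subseteq$'' I plan to expand a given $g\in\ker j^\ast$ by iterated Hadamard: first in the coordinates $z$ about $z=0$, peeling off a term in $\langle z_\bullet\rangle\subseteq\ker\alpha'^\ast$; then the leftover, a function of $(x,y)$, in the coordinates $y$ about $y=\beta(x)$, peeling off a term in $\langle y_\bullet-\beta_\bullet(x)\rangle\subseteq\ker\alpha^\ast$; the final remainder $g_0(x):=g(x,\beta(x),0)$ satisfies $\iota^\ast g_0=j^\ast g=0$, hence $g_0\in I_\iota=\langle q_1,\dots,q_m\rangle$ (itself a Hadamard/Taylor statement, using the finite generation above), and writing $q_\ell(x)=z_\ell-(z_\ell-q_\ell(x))$ splits $g_0$ into a part in $\langle z_\bullet\rangle\subseteq\ker\alpha'^\ast$ and a part in $\langle z_\bullet-q_\bullet(x)\rangle\subseteq\ker\alpha^\ast$. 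I expect this step --- the Hadamard bookkeeping, plus the finite generation of $I_\iota$ and the tightness of the ideal inclusions invoked --- to be where essentially all the work lies; it is also the natural place to first run the warm-up $\mathbb{D}=\mathbb{D}^1$, for which $m=\dim V$ and the $q_\ell$ are the standard generators.

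Granting the identity, the conclusion is purely formal. For each $k$ form $\Delta_k:=f'_k\circ\mathrm{pr}_{V'}-f_k\circ\mathrm{pr}_V\in C^\infty(W)$ (here the abelian-group structure of $\mathbf{R}^\infty$, \cref{OnTheNatureOfRInfinityInFormalSmoothSets}, enters); then $j^\ast\Delta_k=\iota'^\ast f'_k-\iota^\ast f_k=0$ by hypothesis, so $\Delta_k=\psi_k+\eta_k$ with $\psi_k\in\ker\alpha^\ast$ and $\eta_k\in\ker\alpha'^\ast$. Put $\phi_k:=f_k\circ\mathrm{pr}_V+\psi_k$ and $\phi:=(\phi_k)_{k\in\mathbb{N}}\colon W\to\mathbf{R}^\infty$. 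Then $\phi\circ\alpha=f$ (since $\mathrm{pr}_V\circ\alpha=\mathrm{id}_V$ and $\psi_k\circ\alpha=0$) and $\phi\circ\alpha'=f'$ (since $\mathrm{pr}_V\circ\alpha'=\beta'$, $\eta_k\circ\alpha'=0$, and $\psi_k\circ\alpha'=\Delta_k\circ\alpha'=f'_k-f_k\circ\beta'$); together with $\alpha\iota=j=\alpha'\iota'$ and the automatic $\phi\circ j=f\circ\iota=f'\circ\iota'$, this makes every triangle commute, as required. The whole argument used the additive structure and Hadamard's lemma on $C^\infty(-,\mathbf{R}^\infty)$ only componentwise, so it goes through verbatim with $\mathbf{R}^\infty$ replaced by any object enjoying those two properties, as anticipated in \cref{OnTheNatureOfRInfinityInFormalSmoothSets}.
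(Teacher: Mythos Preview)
Your proposal is correct and takes essentially the same approach as the paper: both set $W = V \times V' \times \mathbb{R}^m$ with the extra factor carrying generators of the kernel ideal, embed $V$ and $V'$ as graphs (your abstract lifts $\beta,\beta'$ play the role of the paper's explicit projections $(\vec t,\vec x)\mapsto(\vec t,0)$ after the coordinate splitting $V\cong\mathbb{R}^d\times V_\perp$), and construct $\phi$ by Hadamard-splitting the difference $f'-f$. Your packaging of the core step as the ideal identity $\ker j^\ast = \ker\alpha^\ast + \ker\alpha'^\ast$ is a tidy abstraction of what the paper carries out by explicit formula after rectifying $\iota,\iota'$ via \cref{RectifyingProperEmbeddingsOfCartesianSpaces}; note that your asserted surjectivity of $\iota^\ast$ and finite generation of $I_\iota$ are precisely the facts for which the paper invokes that rectification.
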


In order to make the proof of \cref{FactoredMapsToRInfinityEquivalences} transparent, we present it in stages of increasing generality. We begin with the simple case where $U \defneq \ast$ and $\mathbb{D} \defneq \mathbb{D}^1(1) \defneq \mathrm{Spec}\big( C^\infty(\mathbb{R}^1)/(x^2) \big)$.

\begin{lemma}
\label[lemma]
{FirstStepTowardsFactoredMapsToRInfinityEquivalences}
  Given a commuting diagram of smooth functions of the form of the outer solid arrows in the following diagram:
  $$
  \begin{tikzcd}[
    row sep=12pt,
    column sep=35pt
  ]
    &
    V
    \ar[
      dr,
      "{ f }"
    ]
    \ar[
      d,
      dashed,
      "{ \alpha }"
    ]
    \\
    \mathbb{D}^1(1)
    \ar[
      ur,
      "{ \iota }"
    ]
    \ar[
      dr,
      "{ \iota' }"{swap}
    ]
    \ar[
      r,
      dashed
    ]
    &
    W
    \ar[
      r,
      dashed,
      "{ \phi }"{description, pos=.45}
    ]
    &
    \mathbf{R}^\infty
    \\
    &
    V'
    \ar[
      ur,
      "{ f' }"{swap}
    ]
    \ar[
      u,
      dashed,
      "{ \alpha' }"{swap}
    ]
  \end{tikzcd}
  \;\;\;\;
  \leftrightarrow
  \;\;\;\;
  \begin{tikzcd}[
    row sep=12pt,
    column sep=40
  ]
    &
    T V
    \ar[
      dr,
      "{ T f }"
    ]
    \ar[
      d,
      dashed,
      "{ T \alpha }"
    ]
    \\
    \ast
    \ar[
      ur,
      "{ \tilde \iota }"
    ]
    \ar[
      dr,
      "{ \tilde \iota' }"{swap}
    ]
    \ar[
      r,
      dashed
    ]
    &
    T W
    \ar[
      r,
      dashed,
      "{ T\phi }"{description, pos=.45}
    ]
    &
    T \mathbf{R}^\infty
    \mathrlap{,}
    \\
    &
    T V'
    \ar[
      ur,
      "{ T f' }"{swap}
    ]
    \ar[
      u,
      dashed,
      "{ T \alpha' }"{swap}
    ]
  \end{tikzcd}
  $$
  where $\iota$ and $\iota'$ are monomorphisms,
  then there exists $W \in \CartesianSpaces$ and smooth maps corresponding to the dashed arrows that make the whole diagram commute.
\end{lemma}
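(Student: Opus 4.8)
The plan is to read the indicated ``$\leftrightarrow$'' as an application of the exponential adjunction $\mathrm{Hom}(\mathbb{D}^1(1),-)\cong\mathrm{Hom}(\ast,(-)^{\mathbb{D}^1(1)})$, abbreviating $T(-):=(-)^{\mathbb{D}^1(1)}$. Since $T$ preserves the full subcategory $\CartesianSpaces\hookrightarrow\HaloedSmoothSets$ (one has $T\mathbb{R}^k\cong\mathbb{R}^{2k}$), commutes with the projective limit defining $\mathbf{R}^\infty$ (so $T\mathbf{R}^\infty\cong\mathbf{R}^\infty\times\mathbf{R}^\infty$), and is faithful on maps into a Cartesian space or into $\mathbf{R}^\infty$ (restrict along zero sections), the two filling problems are equivalent --- this is the content of the ``$\leftrightarrow$''. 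The value of this move is that the abstract constraint $f\circ\iota=f'\circ\iota'$ becomes the concrete statement that the tangent vectors $\tilde\iota=(v_0,\vec v)$ and $\tilde\iota'=(v_0',\vec v')$ have the same image in $T\mathbf{R}^\infty$, i.e.\ that $f$ and $f'$ agree \emph{to first order} at the marked points: $f(v_0)=f'(v_0')$ and $\mathrm df_{v_0}(\vec v)=\mathrm df'_{v_0'}(\vec v')$; moreover $\vec v,\vec v'\neq 0$, since $\iota,\iota'$ are monos.

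I would then normalize: as $\iota$ is a monomorphism into a Cartesian space, an affine coordinate change (an isomorphism in $\CartesianSpaces$) lets us take $V=\mathbb{R}^n$, the marked point to be the origin, and $\iota\colon\epsilon\mapsto(\epsilon,0,\dots,0)$ the standard inclusion; likewise for $\iota'$ on $V'=\mathbb{R}^{n'}$. Shrinking $V,V'$ to balls about the origin is harmless (each is again a Cartesian space through which the relevant maps factor), so all constructions below are local.

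For the filling I would take $W:=V\times V'$ (shrunk to a ball), $\alpha:=(\mathrm{id}_V,\psi)$, $\alpha':=(\psi',\mathrm{id}_{V'})$ for smooth $\psi\colon V\to V'$, $\psi'\colon V'\to V$ vanishing at the origin, and set
\[
  \phi
  \;:=\;
  f\circ\mathrm{pr}_V
  \;+\;
  \textstyle\sum_{i=1}^{n'}\bigl(y_i-\psi_i(x)\bigr)\,h_i(x,y)
  \colon\; W\longrightarrow\mathbf{R}^\infty ,
\]
with the $h_i\colon W\to\mathbf{R}^\infty$ to be chosen; then $\phi\circ\alpha=f$ holds automatically, the correction term lying in the ideal of the graph $\Gamma_\psi=\{y=\psi(x)\}$. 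Choosing the $1$-jets of $\psi,\psi'$ at the origin so that $\mathrm d\psi_0$ and $\mathrm d\psi'_0$ both fix $\partial_1$ forces $\alpha\circ\iota=\alpha'\circ\iota'$ (the common value being the standard inclusion $\iota_W$ of $\mathbb{D}^1(1)$ into $W$), leaving only $\phi\circ\alpha'=f'$, which unwinds to the divisibility problem
\[
  \Delta(y)\;:=\;f'(y)-f\bigl(\psi'(y)\bigr)\;=\;\textstyle\sum_{i=1}^{n'}e_i(y)\,H_i(y),
  \qquad
  e_i(y):=y_i-\psi_i\bigl(\psi'(y)\bigr),
\]
in $C^\infty(V',\mathbf{R}^\infty)$, where $H_i(y):=h_i\bigl(\psi'(y),y\bigr)$ may be prescribed freely since $\Gamma_{\psi'}=\{x=\psi'(y)\}$ is an embedded copy of a Cartesian space in $W$. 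Now pick the $2$-jets of $\psi,\psi'$ so that $e_2,\dots,e_{n'}$ is a regular system of parameters cutting out (after straightening) the $y_1$-axis, while $e_1$ vanishes there to exactly order $2$ with non-zero quadratic coefficient --- the ``parabola'' step (for $V=V'=\mathbb{R}^1$: $\psi(t)=t+t^2$, $\psi'=\mathrm{id}$, so $\Gamma_\psi$ osculates the diagonal but meets it only at the origin). Then $\Delta\in(e_1,\dots,e_{n'})$ reduces to divisibility of $\Delta|_{y_1\text{-axis}}$ by $e_1|_{y_1\text{-axis}}$, and since $\Delta|_{y_1\text{-axis}}$ vanishes to order $2$ at the origin --- exactly by the first-order agreement $\mathrm df_0(\partial_1)=\mathrm df'_0(\partial_1)$ --- a double application of Hadamard's lemma (valid on plots of $\mathbf{R}^\infty$, which are $\mathbb{N}$-tuples of smooth functions, applied componentwise, cf.\ \cref{OnTheNatureOfRInfinityInFormalSmoothSets}) divides $t^2$ out of both sides and produces the $h_i$.

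The main obstacle is the tension inherent in the choice of $W$: matching $\alpha\circ\iota=\alpha'\circ\iota'$ forces $\Gamma_\psi$ and $\Gamma_{\psi'}$ to be \emph{tangent} along $\iota_W$, which blocks any partition-of-unity construction of $\phi$ (two tangent submanifolds cannot be smoothly separated), while $f$ and $f'$ genuinely disagree off $\mathbb{D}^1(1)$, so $\phi$ must interpolate after all. The resolution is that the order of tangency is $1$ --- because $\mathbb{D}=\mathbb{D}^1(1)$ --- and matches the order of vanishing of $\Delta$, so that a correctly chosen second-order geometry of the graphs turns ``$\phi$ exists'' into ``$\Delta$ is divisible by the gap functions $e_i$'', which Hadamard settles. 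Getting these orders to line up is the delicate point; in \cref{FactoredMapsToRInfinityEquivalences}, where $\mathbb{D}$ is a higher-order infinitesimal disk and a parameter space $U$ is present, the same bookkeeping needs extra room in $W$, which is precisely the role of the ``sufficiently high codimension'' hypothesis, but the idea is unchanged.
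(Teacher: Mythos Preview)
Your argument is correct and lands on the same Hadamard step as the paper---dividing $\delta(t)=f'(t,0)-f(t,0)$ by $t^2$---but packages it differently. The paper takes $W = V \times V' \times \mathbb{R}^1$, where the extra $\mathbb{R}^1$ is an explicit ``clock'' coordinate $j$: one sets $\alpha(t,\vec x)=((t,\vec x),(t,0),0)$ and $\alpha'(t',\vec x')=((t',0),(t',\vec x'),t'^2)$, so the two images agree on $\mathbb{D}^1(1)$ (where $t'^2=0$) and are separated by $j$ otherwise; then $\phi((t,\vec x),(t',\vec x'),j) = f(t,\vec x)-f'(t,0)+f'(t,\vec x')+j\cdot\mu(t)$ is written down and checked by direct substitution. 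You instead take $W=V\times V'$ and encode the same second-order separation geometrically, via graphs $\Gamma_\psi,\Gamma_{\psi'}$ that osculate to first order (the parabola $\psi(t)=t+t^2$, $\psi'=\mathrm{id}$); your gap functions $e_i(y)=y_i-\psi_i(\psi'(y))$ then play the role of the paper's $j$, and your divisibility problem $\Delta\in(e_1,\dots,e_{n'})$ reduces---with the natural choice $\psi'(y)=(y_1,0,\dots,0)$, $\psi(x)=(x_1+x_1^2,0,\dots,0)$, giving $e_1=-y_1^2$ and $e_i=y_i$ for $i\geq 2$---to exactly the same Hadamard division. Your $W$ is one dimension smaller, but the paper's construction is more elementary (nothing to choose, just formulas) and generalizes to arbitrary $\mathbb{D}$ transparently: replace the single clock $t'^2$ by the tuple $\vec h(\vec t')$ of nil-ideal generators and $\mathbb{R}^1$ by $\mathbb{R}^n$, which is precisely the ``extra room'' you anticipate needing. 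One minor caveat: your ``shrinking $V,V'$ to balls'' is not actually needed with the explicit $\psi,\psi'$ above, and read literally would not be licensed by the lemma as stated (which fixes $V,V'$ rather than passing to germs); it would, however, still suffice for the application, since shrinking is itself a step in the zig-zag relation of \cref{RecognizingReducedObjects}.
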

\begin{proof}
  We may assume without restriction of generality that the base points picked by $\iota$ and $\iota'$ are the origins of $0 \in V$ and $0 \in V'$, respectively (otherwise reparametrize), hence that 
  $$
    \begin{aligned}
    \tilde \iota(\bullet) & =: v_0 \in T_0 V
    \mathrlap{,}
    \\
    \tilde \iota'(\bullet) & =: v'_0 \in T_0 V'
    \mathrlap{.}
    \end{aligned}
  $$

  The assumption that $\iota$ and $\iota'$ are monomorphic means that $v_0 \neq 0$ and $v'_0 \neq 0$.
  Therefore (writing $V_\perp \subset V$ for the orthogonal subspace to $v_0$ and $V'_\perp \subset V'$ for the orthogonal subspace to $v'_0$), we may coordinatize $V$ and $V'$ by:
  $$
    \begin{tikzcd}[row sep=-3pt, column sep=0pt]
      \mathbb{R} \times V_\perp
      \ar[rr, "{ \sim }"]
      &&
      V
      \\
      (t,\vec x\,) 
         &\longmapsto& 
      (t v_0, \vec x\,)
      \mathrlap{\,,}
    \end{tikzcd}
    \;\;\;\;\;\;
    \begin{tikzcd}[row sep=-2pt, column sep=0pt]
      \mathbb{R} \times V'_\perp
      \ar[rr, "{ \sim }"]
      &&
      V'
      \\
      (t', \vec x{\,}'\,) 
        &\longmapsto& 
      (t' v'_0, \vec x{\,}')
      \mathrlap{\,.}
    \end{tikzcd}
  $$  
  Now set
  $$
    W 
      := 
    V \times V' \times \mathbb{R}^1
  $$
  and
  $$
    \begin{tikzcd}[row sep=-2pt, column sep=0pt]
      V \ar[rr, "{ \alpha }"]
      && 
      W
      \\
      (t, \vec x\,) 
        &\longmapsto& 
      \left(
        (t, \vec x\,), 
        (t, 0),
        0
      \right)
      \mathrlap{,}
    \end{tikzcd}
    \;\;\;\;\;\;
    \begin{tikzcd}[row sep=-3pt, column sep=0pt]
      V' \ar[rr, "{ \alpha' }"]
      && 
      W
      \\
      (t',\vec x{\,}') 
        &\longmapsto& 
      \left(
        (t', 0),
        (t', \vec x{\,}'),
        t'^2
      \right)
      \mathrlap{.}
    \end{tikzcd}
  $$
  Since
  $$
    \begin{aligned}
      \alpha(0,0) 
        & = 
      \left(
        (0,0),
        (0,0),
        0
      \right)
      \mathrlap{,}
      \\
      \alpha'(0,0) 
        & = 
      \left(
        (0,0),
        (0,0),
        0
      \right)
      \mathrlap{,}
    \end{aligned}
    \;\;\;\;\;\;
    \begin{aligned}
      \tfrac{\partial}{\partial t}
      \alpha(0,0)
      & =
      \left(
        (1,0),
        (1,0),
        0
      \right)
      \mathrlap{,}
      \\
      \tfrac{\partial}{\partial t'}
      \alpha'(0,0)
      & =
      \left(
        (1,0),
        (1,0),
        0
      \right)
      \mathrlap{,}
    \end{aligned}
  $$
  this makes the left part of our diagram commute.

  Finally, to define $\phi$, first consider
  \begin{align} 
    \delta 
      : 
    \mathbb{R} & \longrightarrow \mathbf{R}^\infty
\\[-1pt]
    t & \longmapsto f'(t, 0) - f(t, 0)
    \,.
  \end{align} 
  By the commutativity of the outer diagram, we have $\delta(0) = 0$ and $\tfrac{d}{d t} \delta(0) = 0$. Hence by Hadamard's lemma there is a smooth function 
  \begin{equation}
    \label{HadamardRemainderOfDeltaForD1Case}
    \inlinetikzcd{
    \mu 
      : 
    \mathbb{R} \ar[r] \& \mathbf{R}^\infty
    },
    \;\;
    \mbox{
      such that $\delta(t) = t^2 \cdot \mu(t)$.
    }
  \end{equation}
  With that function in hand, finally define
  \begin{equation}
    \label{DefOfPhiForD1Case}
    \phi 
    :
    \left(
      (t,\vec x\,), 
      (t',\vec x{\,}'),
      j
    \right)
    \longmapsto
    f(t, \vec x\,)
    -
    f'(t, 0)
    +
    f'(t, \vec x{\,}')
    +
    j
    \cdot
    \mu(t)
    \mathrlap{\,.}
  \end{equation}
  We check that this makes the right part of our diagram commute, since:
  $$
    \begin{aligned}
      \phi \circ \alpha(t, \vec x\,)
      &=
      \phi\left(
        (t, \vec x\,), 
        (t, 0),
        0
      \right)
      \\
      & =
      f(t, \vec x\,) + 0 + 0
      \mathrlap{\,,}
    \end{aligned}
  $$
  and
  $$
    \begin{aligned}
      \phi \circ \alpha'(t', \vec x{\,}')
      &=
      \phi\left(
        (t', 0), 
        (t', \vec x{\,}'),
        t'^2
      \right)
      \\
      & =
      \grayunderbrace{
      f(t',0)
      -
      f'(t',0)
      }{\mathclap{%
       -\delta(t')
      }}
      +
      f'(t',\vec x{\,}')
      +
      \grayunderbrace{
      t'^2
      \cdot
      \mu(t')
      }{\mathclap{%
        \delta(t')
      }
      }
      \\
      & =
      f'(t',\vec x{\,}')
      \mathrlap{\,.}
    \end{aligned}
  $$
  This completes the proof.
\end{proof}

Next, we generalize this proof to the case where (still $U \defneq \ast$ but) $\mathbb{D}$ is arbitrary. To that end, choose an inclusion $\inlinetikzcd{\mathbb{D} \ar[r, hook] \& \mathbb{R}^d}$ with minimal $d$ and choose a set of generators
\begin{equation}
  \label{GeneratorsForKernel}
  \vec h
  :=
  \left(
    h_i 
    \in
    C^\infty(\mathbb{R}^d)
  \right)_{i = 1}^{n}
  \;\;
  \left(\mbox{for some}\;n \in \mathbb{N}\right)
\end{equation}
for the kernel of the dual algebra homomorphism
$
  \begin{tikzcd}[sep=15pt]
    C^\infty(\mathbb{R}^d)
    \ar[
      r,
      ->>
    ]
    &
    C^\infty(\mathbb{D})
    \mathrlap{\,.}
  \end{tikzcd}
$

\begin{lemma}
\label[lemma]
  {TowardsFactoredMapsToRInfinityEquivalences}
  Given a commuting diagram of smooth functions of the form of the outer solid arrows in the following diagram:
  $$
  \begin{tikzcd}[
    row sep=12pt,
    column sep=40
  ]
    &
    V
    \ar[
      dr,
      "{ f }"
    ]
    \ar[
      d,
      dashed,
      "{ \alpha }"
    ]
    \\
    \mathbb{D}
    \ar[
      ur,
      "{ \iota }"
    ]
    \ar[
      dr,
      "{ \iota' }"{swap}
    ]
    \ar[
      r,
      dashed
    ]
    &
    W
    \ar[
      r,
      dashed,
      "{ \phi }"{description, pos=.45}
    ]
    &
    \mathbf{R}^\infty,
    \\
    &
    V'
    \mathrlap{\,,}
    \ar[
      ur,
      "{ f' }"{swap}
    ]
    \ar[
      u,
      dashed,
      "{ \alpha' }"{swap}
    ]
  \end{tikzcd}
$$
where $\iota$ and $\iota'$ are monomorphisms,
then there exists $W \in \CartesianSpaces$ and dashed maps also making the inner diagram commute.
\end{lemma}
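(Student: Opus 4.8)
The plan is to follow the same pattern as the proof of \cref{FirstStepTowardsFactoredMapsToRInfinityEquivalences}, now carried out directly in terms of $\mathbb D$, the single structural change being that the second‑order vanishing exploited there (the conditions $\delta(0)=0$ and $\tfrac{d}{dt}\delta(0)=0$, whence the Hadamard factorization $\delta(t)=t^2\mu(t)$) gets replaced by the statement that the relevant defect function lies in the ideal $I:=\ker\!\big(C^\infty(\mathbb R^d)\twoheadrightarrow C^\infty(\mathbb D)\big)$, hence is a linear combination of the chosen generators $\vec h$ of \cref{GeneratorsForKernel}. As in \cref{OnTheNatureOfRInfinityInFormalSmoothSets}, I treat maps from Cartesian spaces into $\mathbf R^\infty$ as $\mathbb N$-tuples of ordinary smooth functions, so that every construction below is elementary.

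\emph{First, the normal form.} After translating $V$ and $V'$, assume that $\iota$ and $\iota'$ pick out the origins $0\in V$ and $0\in V'$. Since $\iota$ is a monomorphism out of the infinitesimally thickened point $\mathbb D$, its dual $C^\infty$-ring homomorphism $C^\infty(V)\to C^\infty(\mathbb D)=C^\infty(\mathbb R^d)/I$ is surjective; choosing preimages $t_1,\dots,t_d\in C^\infty(V)$ of the classes of the standard coordinates of $\mathbb R^d$, minimality of the embedding $\mathbb D\hookrightarrow\mathbb R^d$ forces the differentials $dt_i|_0$ to be linearly independent, so the $t_i$ extend to a chart $V\cong\mathbb R^d\times V_\perp$; subtracting from the remaining coordinates their $C^\infty$-ring expressions in the $t_i$, one arrives at a chart in which $\iota$ is the standard inclusion $\mathbb D\hookrightarrow\mathbb R^d\times\{0\}$. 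Doing the same for $\iota'$ gives $V'\cong\mathbb R^d\times V'_\perp$. These are isomorphisms of $V$ and $V'$, which we absorb into $f$ and $f'$ and which preserve all commuting triangles. Henceforth write $(t,\vec x\,)$ for the coordinates on $V$ and $(t',\vec x{\,}')$ for those on $V'$, with $t,t'\in\mathbb R^d$.

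\emph{Then, the construction.} In exact analogy with the $\mathbb D^1(1)$ case put $W:=V\times V'\times\mathbb R^{n}$ and
\begin{equation*}
  \alpha(t,\vec x\,):=\big((t,\vec x\,),\,(t,0),\,\vec 0\,\big),
  \qquad
  \alpha'(t',\vec x{\,}'):=\big((t',0),\,(t',\vec x{\,}'),\,\vec h(t')\big).
\end{equation*}
The left-hand triangles commute: precomposing $\alpha$ and $\alpha'$ with the standard inclusions of $\mathbb D$ yields the same map, the only potentially differing component being the one valued in $\mathbb R^{n}$, which is $\vec 0$ for $\alpha$ and equals $\vec h$ restricted to $\mathbb D$ for $\alpha'$ — and the latter vanishes since each $h_i\in I$. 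To define $\phi$, introduce the defect
\begin{equation*}
  \delta\colon\mathbb R^d\longrightarrow\mathbf R^\infty,\qquad \delta(t):=f'(t,0)-f(t,0)
\end{equation*}
(well defined by additivity). Commutativity of the outer diagram, $f\circ\iota=f'\circ\iota'$, says precisely that $\delta$ restricts to $0$ along $\mathbb D\hookrightarrow\mathbb R^d$; componentwise this means each of the $\mathbb N$-many smooth functions constituting $\delta$ lies in $I=(\vec h)$, so collecting the coefficients componentwise yields maps $\mu_1,\dots,\mu_n\colon\mathbb R^d\to\mathbf R^\infty$ with
\begin{equation}
  \label{HadamardRemainderOfDelta}
  \delta(t)\;=\;\sum_{i=1}^{n}h_i(t)\,\mu_i(t)
\end{equation}
(for $\mathbb D=\mathbb D^1(1)$ this specializes to the two successive applications of Hadamard's lemma in \cref{HadamardRemainderOfDeltaForD1Case}). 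Finally set
\begin{equation}
  \label{DefinitionOfPhi}
  \phi\big((t,\vec x\,),(t',\vec x{\,}'),\vec j\,\big)\;:=\;f(t,\vec x\,)-f'(t,0)+f'(t,\vec x{\,}')+\sum_{i=1}^{n}j_i\,\mu_i(t)\mathrlap{\,.}
\end{equation}
One checks $\phi\circ\alpha=f$ (the two middle summands cancel and the last sum vanishes) and $\phi\circ\alpha'=f'$ (the term $f(t',0)-f'(t',0)=-\delta(t')$ cancels $\sum_i h_i(t')\,\mu_i(t')=\delta(t')$), just as in the proof of \cref{FirstStepTowardsFactoredMapsToRInfinityEquivalences}.

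\emph{Expected main obstacle.} The one non-formal ingredient is the normal-form step: one needs that a monomorphism out of the infinitesimally thickened point $\mathbb D$ has surjective dual $C^\infty$-ring homomorphism and that its component functions complete to an adapted chart. This is a matter of the structure theory of Weil algebras / $C^\infty$-rings rather than an analytic difficulty, but it should be spelled out with care. Looking further ahead, the passage to general $U$ in \cref{FactoredMapsToRInfinityEquivalences} will require the parametrized analogue of \cref{HadamardRemainderOfDelta} — that a map $U\times\mathbb R^d\to\mathbf R^\infty$ vanishing on $U\times\mathbb D$ lies in the ideal generated by $\vec h$ inside $C^\infty(U\times\mathbb R^d)$ — and it is there that Hadamard's lemma (now with parameters) is genuinely invoked.
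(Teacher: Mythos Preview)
Your proof is correct and follows essentially the same route as the paper: the same choice $W = V \times V' \times \mathbb{R}^n$, the same maps $\alpha$, $\alpha'$, the same defect $\delta$ written as $\sum_i h_i\,\mu^i$, and the same formula for $\phi$. The paper is actually terser than you on the normal-form step --- it simply asserts the decomposition $V \cong \mathbb{R}^d \times V_\perp$ ``into the span of the image of $\iota$ and its orthogonal complement'' without spelling out the $C^\infty$-ring argument you sketch --- so your version is if anything more careful there.
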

\begin{proof}
  Without restriction of generality we may assume that $\iota(0) = 0 \in V$ and $\iota'(0) = 0 \in V'$ (otherwise reparametrize). 
  Since $\iota$ and $\iota'$ are monomorphic, we may decompose $V$, $V'$ respectively into the span of the image of $\iota$, $\iota'$ and its orthogonal complement, respectively:
  $$
    \begin{tikzcd}[row sep=0pt]
      \mathbb{R}^d \times V_\perp
      \ar[r, "{ \sim }"]
      &
      V
      \mathrlap{\,,}
      \\
      \mathbb{R}^d \times V'_\perp
      \ar[r, "{ \sim }"]
      &
      V'
      \mathrlap{\,.}
    \end{tikzcd}
  $$
  Now take
  $$
    W 
    :=
    V 
    \times
    V'
    \times
    \mathbb{R}^n
  $$
  and
  $$
    \begin{aligned}
      \alpha(\vec t\,, \vec x\,)
      & :=
      \left(
        (\vec t\,, \vec x\,), 
        (\vec t\,, 0),
        0
      \right)
      \\
      \alpha'(\vec t{\phantom{.}}', \vec x{\,}')
      &
      :=
      \left(
        (\vec t{\phantom{.}}',0)
        ,\,
        (\vec t{\phantom{.}}', \vec x{\,}'),
        \vec h(\vec t{\phantom{.}}') 
      \right)
      \mathrlap{,}
    \end{aligned}
  $$
  where the last component are the values of the generators \cref{GeneratorsForKernel} of the nil ideal.
  
  Then we have the following composites (where in the lower lines we show the pullback of the canonical coordinate functions along the top map, whose inspection is sufficient for identifying these maps, cf. \cite[Prop. 4.6]{GS26-FieldsII})
  $$
    \begin{tikzcd}[
      column sep=-7pt,
      row sep=-5pt
    ]
      \mathbb{D}
      \ar[r, "{ \iota }"]
      &[25pt]
      \mathbb{R}^d 
        \times 
      V_\perp
      \ar[r, "{ \alpha }"]
      &[25pt]
      \mathbb{R}^d 
        &\times& 
      V_\perp 
        &\times&
      \mathbb{R}^d 
        &\times& 
      V'_\perp
      &\times&
      \mathbb{R}^n
      \\
      {[\,\vec t \;]}
      \ar[
        r,
        <-|,
        shorten=6pt
      ]
      &
      \vec t
      \ar[
        r,
        <-|,
        shorten=6pt
      ]
      &
      \vec t 
      \\
      0
      \ar[
        r,
        <-|,
        shorten=4pt
      ]
      &
      \vec x
      \ar[
        rrr,
        <-|,
        shorten=4pt
      ]
      & 
      &&
      \vec x
      \\
      {[\,{\vec t}{\phantom{.}}']}
      \ar[
        r,
        <-|,
        shorten=4pt
      ]
      &
      {\vec t}{\phantom{.}}'
      \ar[
        rrrrr,
        <-|,
        shorten=4pt
      ]
      & 
      &&
      &&
      {\vec t}{\phantom{.}}'
      \\
      0
      \ar[
        r,
        <-|,
        shorten=4pt
      ]
      &
      0
      \ar[
        rrrrrrr,
        <-|,
        shorten=4pt
      ]
      & 
      &&
      &&
      &&
      {\vec x}{\,}'
      \\
      0
      \ar[
        r,
        <-|,
        shorten=4pt
      ]
      &
      0
      \ar[
        rrrrrrrrr,
        <-|,
        shorten=4pt
      ]
      & 
      &&
      &&
      &&
      &&
      \vec j
    \end{tikzcd}
  $$
  and
  $$
    \begin{tikzcd}[
      column sep=-7pt,
      row sep=-5pt
    ]
      \mathbb{D}
      \ar[r, "{ \iota' }"]
      &[25pt]
      \mathbb{R}^d 
        \times 
      V'_\perp
      \ar[r, "{ \alpha' }"]
      &[25pt]
      \mathbb{R}^d 
        &\times& 
      V_\perp 
        &\times&
      \mathbb{R}^d 
        &\times& 
      V'_\perp
        &\times&
      \mathbb{R}^n
      \\
      {[\,\vec t\;]}
      \ar[
        r,
        <-|,
        shorten=6pt
      ]
      &
      \vec t
      \ar[
        r,
        <-|,
        shorten=6pt
      ]
      &
      \vec t 
      \\
      0
      \ar[
        r,
        <-|,
        shorten=4pt
      ]
      &
      0
      \ar[
        rrr,
        <-|,
        shorten=4pt
      ]
      & 
      &&
      \vec x
      \\
      {[\,{\vec t}{\phantom{.}}']}
      \ar[
        r,
        <-|,
        shorten=4pt
      ]
      &
      {\vec t}{\phantom{.}}'
      \ar[
        rrrrr,
        <-|,
        shorten=4pt
      ]
      & 
      &&
      &&
     {\vec t}{\phantom{.}}'
      \\
      0
      \ar[
        r,
        <-|,
        shorten=4pt
      ]
      &
      {\vec x}{\,}'
      \ar[
        rrrrrrr,
        <-|,
        shorten=4pt
      ]
      & 
      &&
      &&
      &&
{ \vec x}{\,}'
      \\
      0
      \ar[
        r,
        <-|,
        shorten=4pt
      ]
      &
      \vec h({\vec t}{\phantom{.}}')
      \ar[
        rrrrrrrrr,
        <-|,
        shorten=4pt
      ]
      & 
      &&
      &&
      &&
      &&
      \vec j
      \mathrlap{\,,}
    \end{tikzcd}
  $$
  where in the last line we use that the functions $\vec h$ \cref{GeneratorsForKernel} vanish on $\mathbb{D}$, by assumption.  
  Since these two composite maps hence agree, the left part of our diagram commutes.

  Now consider
  $$
    \begin{tikzcd}[row sep=-2pt, column sep=0pt]
      \mathbb{R}^d
      \ar[rr, "{ \delta }"]
      &&
      \mathbf{R}^\infty
      \\
      \vec t 
      &\longmapsto&
      f'(\vec t\,, 0)
      -
      f(\vec t\,, 0)\,.
    \end{tikzcd}
  $$
  By the commutativity of the outer part of our diagram,
  the components of this map are in the ideal spanned by the $h_i$ \cref{GeneratorsForKernel}, so that
  \begin{equation}
    \label{HadamardRemainderOfDelta}
    \delta(\vec t\,)
    =
    \textstyle{\sum_i}
    \,
    h_i(\vec t\,)
    \,
    \mu^i(\vec t\,)
    \;\;\;
    \mbox{for some}
    \;\;\;
    (\,
      \inlinetikzcd{
      \mu^i
      :
      \mathbb{R}^d
      \ar[r]
      \&
      \mathbf{R}^\infty
      }
    )_{i = 1}^{n}
    \mathrlap{\,.}
  \end{equation}
  With this in hand, finally define
  \begin{equation}
    \label{DefinitionOfPhi}
    \phi\left(
      (\vec t\,, \vec x \,)
      ,\,
      ({\vec t}{\phantom{.}}',\vec x{\,}'\,)
      ,\
      \vec j \,
    \right)
    :=
    f(\vec t\,, \vec x \,)
    -
    f'(\vec t\,, 0)
    +
    f'(\vec t\,,\vec x{\,}')
    +
    \textstyle{\sum_i}
    \,
    j_i
    \,
    \mu^i(\vec t\,)
    \mathrlap{\,.}
  \end{equation}
  We check by direct computation that this indeed makes the right part of our diagram commute:
  $$
    \begin{aligned}
      \phi \circ \alpha(
        \vec t\,, \vec x \,
      )
      &
      =
      \phi\left(
        (\vec t\,, \vec x \,)
        ,\,
        (\vec t\,, 0)
        ,\,
        0
      \right)
      \\
      & =
      f(\vec t\,, \vec x\,)
      +
      0
      +
      0
    \end{aligned}
  $$
  and
  $$
    \begin{aligned}
      \phi \circ \alpha'(
        {\vec t}{\phantom{.}}', \vec x{\,}'
      )
      &
      =
      \phi\left(
        ({\vec t}{\phantom{.}}', 0)
        ,\,
        ({\vec t}{\phantom{.}}', \vec x{\,}')
        ,\,
        \vec h(\vec t\,)
      \right)
      \\
      & =
      \grayunderbrace{
      f(\vec t\,, 0)
      -
      f'(\vec t\,, 0)
      }{\mathclap{%
        -\delta(\vec t\,)
      }}
      +
      f'(\vec t\,, \vec x{\,}')
      +
      \grayunderbrace{%
        \textstyle{\sum_i}
        \,
        h_i(\vec t\,)
        \,
        \mu^i(\vec t\,)
      }{{\mathclap{
        \delta(\vec t\,)
      }}}
      \\
      & =
      f'(\vec t\,, \vec x{\,}')
      \mathrlap{\,.}
    \end{aligned}
  $$
  This completes the proof.
\end{proof}

Finally, it is not much more work to lift this proof to full generality:

\begin{proof}[Proof of \cref{FactoredMapsToRInfinityEquivalences}]
By \cref{RectifyingProperEmbeddingsOfCartesianSpaces} we may assume, without restriction of generality, that, up to diffeomorphism on the target: 
$$
  \begin{array}{l}
    \inlinetikzcd{
      \iota \vert_{U\times \{*\}} \simeq (\mathrm{id}_,0) : U \ar[r, hook] \& U \times \tilde V \simeq V}
    \\[-2pt]
    \inlinetikzcd{
      \iota' \vert_{U\times \{*\}} \simeq (\mathrm{id},0) : U \ar[r, hook] \& U \times \tilde V' \simeq V'}
      \mathrlap{.}
  \end{array}
$$
But under this identification, we have over each point $u \in U$ the exact situation of \cref{TowardsFactoredMapsToRInfinityEquivalences}, and hence the solution is as given there, just with $W$ having another factor of $U$ and with $\alpha$, $\alpha'$ and $\phi$ depending on a further variable $u \in U$.

Concretely, in generalizing \cref{FirstStepTowardsFactoredMapsToRInfinityEquivalences} this way: 
\begin{enumerate}
\item The parametrization of $V$ is now as a bundle over $U\times \mathbb{R}$, with $(u, t\cdot v(u), \vec x_{v(u)}) \in V$ , where $\vec x_{v(u)}$ are  elements perpendicular to $v(u)$.

\item The corresponding $\delta$ is now a map $U\times \mathbb{R} \rightarrow \mathbf{R}^\infty$ which satisfies $\delta(u,0) = 0$, $\frac{\partial}{\partial t} \delta (u,0) =0$ and so, by the \emph{partial/parameterized version} of Hadamard's lemma \cref{HadamardsLemma}, may be written as $\delta(u,t) = t^2 \cdot \mu(u,t)$ for a smooth $\mu : U\times \mathbb{R} \rightarrow \mathbf{R}^\infty$. \qedhere
\end{enumerate}
\end{proof}

\appendix
\section{Background}

For reference in the main text, here we briefly record some basic definitions and facts.

\subsection{Formal Smooth Sets}
\label{FormalSmoothSets}

We briefly review the
topos of \emph{formal smooth sets} (\parencites[Def. 4.5.5]{Sc13-dcct}[\S 2.1]{KS26}[\S 2]{GS26-FieldsII}, exposition in \cite{Schreiber2025}), equivalent (by \cite[\S 4.1]{GS26-FieldsII}) to Dubuc's \emph{Cahiers topos} \parencites{Dubuc1979}{KockReyes1987}, which is a ``well-adapted model'' for synthetic differential geometry. 

\smallskip 
Our ground field is the real numbers. We write $\mathrm{CAlg}$ for the category of commutative $\mathbb{R}$-algebras.

\begin{definition}
\label[definition]{CategoryOfCartesianSpaces}
\begin{itemize} 
\item[\bf (i)] We write $\CartesianSpaces$ for the category whose objects are the Cartesian spaces $\mathbb{R}^n$, $n \in \mathbb{N}$, and whose morphisms are the smooth functions between these. Hence this is the full subcategory of smooth manifolds
  $$
    \begin{tikzcd}
      \CartesianSpaces
      \ar[r, hook]
      &
      \mathrm{SmthMfd}
    \end{tikzcd}
  $$
  on the Cartesian spaces. We will denote generic Cartesian spaces by $U, V \in \CartesianSpaces$. 

 \item[\bf (ii)]  We regard this category as a site by equipping it with the coverage (Grothendieck pre-topology) of differentiably good open covers.
 \end{itemize} 
\end{definition}
\begin{proposition}[{cf. \cite[\S 35]{KolarMichorSlovak1993}}]
\label[proposition]{MilnorExercise}
  The functor which forms (plain) $\mathbb{R}$-algebras of smooth functions constitutes a full inclusion of smooth manifolds, hence in particular of Cartesian spaces \textup{(\cref{CategoryOfCartesianSpaces})}:
  $$
    \begin{tikzcd}
      \CartesianSpaces
      \ar[r, hook]
      &
      \mathrm{SmthMfd}
      \ar[
        rr, 
        hook, 
        "{ C^\infty(-) }"
      ]
      &&
      \mathrm{CAlg}^{\mathrm{op}}
    \end{tikzcd}
  $$
\end{proposition}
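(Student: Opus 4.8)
The plan is to prove that for smooth manifolds $M,N$ the comparison map
$$
  C^\infty(M,N)\longrightarrow \mathrm{Hom}_{\mathrm{CAlg}}\big(C^\infty(N),\,C^\infty(M)\big),
  \qquad f\longmapsto f^\ast = (-)\circ f,
$$
is a bijection; the claim about Cartesian spaces is just the restriction to $M=\mathbb{R}^m$, $N=\mathbb{R}^n$. Injectivity (faithfulness) is the easy half: if $f\neq g$ then $f(p)\neq g(p)$ for some $p\in M$, and since smooth functions separate points on a manifold (via a chart and a bump function, or via a Whitney embedding) there is $h\in C^\infty(N)$ with $h(f(p))\neq h(g(p))$, so $f^\ast h\neq g^\ast h$. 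All the content is in surjectivity.

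For fullness I would reduce everything to a description of the \emph{real points} of the algebra: every $\mathbb{R}$-algebra homomorphism $\chi\colon C^\infty(N)\to\mathbb{R}$ equals evaluation $\mathrm{ev}_q$ at a unique $q\in N$. Granting this, let $\varphi\colon C^\infty(N)\to C^\infty(M)$ be an algebra homomorphism; for each $p\in M$ the composite $\mathrm{ev}_p\circ\varphi$ is such a homomorphism, hence equals $\mathrm{ev}_{f(p)}$ for a unique point $f(p)\in N$, defining a set-map $f\colon M\to N$. By construction $g(f(p))=\varphi(g)(p)$ for all $g,p$, i.e.\ $f^\ast g=\varphi(g)$; in particular every $f^\ast g$ is smooth. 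That $f$ itself is smooth follows locally: fixing a Whitney embedding $\iota\colon N\hookrightarrow\mathbb{R}^k$ with coordinate functions $y^1,\dots,y^k\in C^\infty(N)$, the composite $\iota\circ f=(\varphi(y^1),\dots,\varphi(y^k))\colon M\to\mathbb{R}^k$ is smooth with image in $N$, so $f$ is continuous (as $\iota$ is an embedding) and, reading off local coordinates, smooth. Uniqueness of $f$ is again point-separation. Thus $f\in C^\infty(M,N)$ satisfies $f^\ast=\varphi$.

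It remains to establish the spectrum lemma, which is the main obstacle. For Cartesian spaces it is one line of Hadamard's lemma: given $\chi\colon C^\infty(\mathbb{R}^m)\to\mathbb{R}$, set $q^i:=\chi(x^i)$; for any $g$ write $g(x)=g(q)+\sum_i(x^i-q^i)h_i(x)$ with $h_i\in C^\infty(\mathbb{R}^m)$, apply $\chi$, and use $\chi(1)=1$ together with $\chi(x^i-q^i\cdot 1)=0$ to obtain $\chi(g)=g(q)$. The general case reduces to this: take a \emph{closed} embedding $\iota\colon N\hookrightarrow\mathbb{R}^k$ (here second countability/paracompactness of $N$ is used); since every smooth function on a closed submanifold extends to the ambient space, $C^\infty(N)=C^\infty(\mathbb{R}^k)/I_N$ with $I_N$ the vanishing ideal of $N$, so $\chi$ lifts to $\tilde\chi\colon C^\infty(\mathbb{R}^k)\to\mathbb{R}$ with $I_N\subseteq\ker\tilde\chi$. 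By the Cartesian case $\tilde\chi=\mathrm{ev}_q$ for $q=(\tilde\chi(x^1),\dots,\tilde\chi(x^k))$, and $I_N\subseteq\ker\mathrm{ev}_q$ forces $q\in N$, since any point off the closed set $N$ is separated from $N$ by a smooth function vanishing on $N$; hence $\chi=\mathrm{ev}_q$ on $C^\infty(N)$.

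Alternatively one can avoid the embedding and argue that a maximal ideal $\mathfrak{m}=\ker\chi$ has nonempty zero locus directly: picking a proper exhaustion function $\rho\in C^\infty(N)$ and noting that the zero set of $\rho-\chi(\rho)\cdot 1$ is compact, if $\mathfrak m$ had empty zero locus one could cover this compactum by finitely many loci $\{g_j\neq 0\}$ with $g_j\in\mathfrak m$ and form $(\rho-\chi(\rho))^2+\sum_j g_j^2\in\mathfrak m$, which would be everywhere positive, hence a unit — a contradiction; then $\mathfrak m$ is contained in, so equals, the maximal ideal of some point. This is the classical argument recorded in \cite[\S 35]{KolarMichorSlovak1993}. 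Either way, the essential point — and the reason the statement is about manifolds rather than arbitrary locally ringed spaces — is the exclusion of ``homomorphisms at infinity'', which relies precisely on the global hypothesis that $N$ carries a proper smooth function.
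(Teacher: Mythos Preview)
Your argument is correct and complete. Note, however, that the paper does not actually supply a proof of this proposition: it merely records the statement and cites \cite[\S 35]{KolarMichorSlovak1993} as the reference, so there is nothing to compare against beyond that citation. Your write-up is precisely a self-contained account of the argument found there (you even say so in your alternative paragraph), with the pleasant addition of the Hadamard-lemma shortcut for the Cartesian case and the Whitney-embedding reduction for general $N$; either route suffices, and both are standard.
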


\begin{definition}
\label[definition]{HaloedPoints}
The category of \emph{infinitesimally thickened points} (\emph{haloed points}) is the full subcategory of the opposite of commutative algebras
$$
  \begin{tikzcd}[sep=0pt]
    \mathrm{FrmPnt}
    \ar[rr, hook]
    &&
    \mathrm{CAlg}^{\mathrm{op}}
    \\
    \mathbb{D} 
    &\longmapsto&
    C^\infty(\mathbb{D})
    \mathrlap{\,.}
  \end{tikzcd}
$$
on those what are called \emph{Weil algebras} in the SDG literature or \emph{Artin local algebras} in algebraic geometry, namely those of the form
$$
  C^\infty(\mathbb{D})
  \simeq_{\mathbb{R}}
  \mathbb{R} \oplus V
  \mathrlap{\,,}
$$
where $V \subset C^\infty(\mathbb{D})$ is a finite-dimensional ideal which is \emph{nilpotent} in that
$$
  \exists_{k \in \mathbb{N}}
  \;:\;
  V^{k+1} = 0
  \mathrlap{\,.}
$$
The smallest $k$ for which this holds is the \emph{order of nilpotency} of $\mathbb{D}$.
\end{definition}
\begin{example}
\label[example]{InfinitesimalDisks}
  An \emph{infinitesimal disk} of dimension $d \in \mathbb{N}$ and order of infinitesimality $k \in \mathbb{N}$ is an infinitesimally thickened point
  $
    \mathbb{D}^d(k)
    \in
    \mathrm{FrmPnt}
  $ (\cref{HaloedPoints}) of the form
  $$
    \begin{aligned}
    C^\infty\big(\mathbb{D}^d(k)\big)
    & \simeq
    \mathbb{C}^{\infty}(\mathbb{R}^d)
    /
    (x^1, \cdots, x^d)^{k+1}
    \\
    &
    \simeq
    \mathbb{R}[\epsilon^1, \cdots, \epsilon^d]
    /
    (\epsilon^1, \cdots, \epsilon^d)^{k+1}
    \mathrlap{\,.}
    \end{aligned}
  $$
\end{example}
The second equivalence above follows by Hadamard's lemma, which we repeatedly refer to. For reference, we state it in a version that covers both parameters and partial derivatives:
\begin{lemma}[Hadamard's Lemma, cf. {\cite[Lem. 2.2.7]{duistermaat-kolk-1}, we follow \cite[Lem. 4.1]{GS26-FieldsII}}]\label{HadamardsLemma}
For any smooth function $f(x,y)\in C^\infty(\mathbb{R}^k\times\mathbb{R}^m)$ and any $l\in \mathbb{N}$, there exist smooth functions $\{h_{i_{1}\cdots i_{l+1}}\}_{i_1,\cdots, i_{l+1}=1,\cdots, m}\subset C^\infty(\mathbb{R}^k\times \mathbb{R}^m)$ such that 
\begin{align*}
f&=  f(x,0) + \sum_{i=1}^m y^i \partial_{y^i} f(x,0) + \tfrac{1}{2} \sum_{i,j=1}^m y^i y^j \partial_{y^i}\partial_{y^j} f(x,0)
\\
 & \qquad {}
 + \cdots
 + \; \tfrac{1}{l!} \sum_{i_{1},\cdots,i_{l}=1}^m y^{i_{1}}\cdots y^{i_{l}}\cdot \partial_{y^{i_{1}}}\cdots \partial_{y^{i_{l}}}f(x,0)
\\
 & \quad {}
 + \sum_{i_{1},\cdots,i_{l+1}=1}^m y^{i_{1}}\cdots y^{i_{l+1}}\cdot h_{i_{1}\cdots i_{l+1}} (x,y) .
\end{align*}
\end{lemma}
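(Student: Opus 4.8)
The plan is to reduce Hadamard's Lemma to the elementary one-variable Taylor theorem with integral remainder, applied along the straight segment from $(x,0)$ to $(x,y)$ with $x$ held as an inert parameter. Concretely, for each fixed $(x,y) \in \mathbb{R}^k \times \mathbb{R}^m$ I would set $g(t) := f(x, ty)$; since $\mathbb{R}^m$ is star-shaped about the origin, the ray $t \mapsto (x, ty)$ stays in the domain for $t \in [0,1]$, so $g$ is a well-defined smooth function of $t$ which moreover depends smoothly on the parameters $(x,y)$. Everything then reduces to re-expressing the standard Taylor expansion of $g$ at $t=0$, evaluated at $t=1$, back in terms of $f$.

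First I would record the chain-rule identity
$$
  g^{(j)}(t)
  =
  \sum_{i_1, \ldots, i_j = 1}^m
  y^{i_1} \cdots y^{i_j}\,
  \big(\partial_{y^{i_1}} \cdots \partial_{y^{i_j}} f\big)(x, ty),
$$
so that $g^{(j)}(0) = \sum_{i_1,\ldots,i_j} y^{i_1}\cdots y^{i_j}\,(\partial_{y^{i_1}}\cdots\partial_{y^{i_j}} f)(x,0)$. I would then invoke the one-dimensional Taylor formula with integral remainder,
$$
  g(1)
  =
  \sum_{j=0}^{l} \frac{g^{(j)}(0)}{j!}
  +
  \frac{1}{l!} \int_0^1 (1-t)^l\, g^{(l+1)}(t)\, \mathrm{d}t,
$$
whose left-hand side is $f(x,y)$. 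Substituting the formula for $g^{(j)}(0)$ into the finite sum reproduces, term by term (including the factors $\tfrac{1}{j!}$), the displayed Taylor polynomial in the statement; and substituting the formula for $g^{(l+1)}(t)$ into the remainder, then pulling the $t$-independent monomials $y^{i_1}\cdots y^{i_{l+1}}$ outside the integral, yields exactly the claimed final sum with coefficients
$$
  h_{i_1 \cdots i_{l+1}}(x,y)
  :=
  \frac{1}{l!}
  \int_0^1 (1-t)^l\,
  \big(\partial_{y^{i_1}} \cdots \partial_{y^{i_{l+1}}} f\big)(x, ty)\, \mathrm{d}t.
$$
The case $l=0$ recovers the classical first-order Hadamard lemma in the form used at \cref{HadamardRemainderOfDeltaForD1Case,HadamardRemainderOfDelta}.

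The one genuinely analytic point --- and the step I expect to be the main, if routine, obstacle --- is to verify that each $h_{i_1\cdots i_{l+1}}$ is itself a \emph{smooth} function of $(x,y)$ on all of $\mathbb{R}^k \times \mathbb{R}^m$, rather than merely continuous. I would handle this by differentiation under the integral sign: the integrand $(t,x,y) \mapsto (1-t)^l\,(\partial_{y^{i_1}}\cdots\partial_{y^{i_{l+1}}} f)(x,ty)$ is jointly smooth, and on $[0,1] \times K$ for any compact $K \subset \mathbb{R}^k\times\mathbb{R}^m$ all of its partial derivatives in $(x,y)$ are continuous, hence bounded; Leibniz's rule then applies to show $h_{i_1\cdots i_{l+1}} \in C^\infty(\mathbb{R}^k \times \mathbb{R}^m)$, with each derivative computed by differentiating under the integral. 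This completes the argument.
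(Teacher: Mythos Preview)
Your proof is correct and is the standard argument: reduce to one-variable Taylor with integral remainder along the ray $t\mapsto (x,ty)$, then verify smoothness of the remainder coefficients by differentiation under the integral sign. Note, however, that the paper does not actually prove this lemma itself; it is stated with external references (to Duistermaat--Kolk and to \cite[Lem.~4.1]{GS26-FieldsII}) and used as a black box. Your argument is precisely the classical proof one finds in those sources, so there is nothing to compare --- you have supplied what the paper only cites.
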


\begin{definition}
\label[definition]{FormalCartesianSpaces}
 
  \begin{itemize} 
  \item[\bf (i)] The category of \emph{formal Cartesian spaces} is the full subcategory of the opposite of commutative algebras,
  $$
    \begin{tikzcd}[sep=0pt]
      \HaloedCartesianSpaces
      \ar[
        rr, 
        hook,
        "{
          C^\infty(-)
        }"
      ]
      &&
      \mathrm{CAlg}^{\mathrm{op}}
      \\
      U \times \mathbb{D}
      &\longmapsto&
      C^\infty(U)
      \otimes_{\mathbb{R}}
      C^\infty(\mathbb{D})
      \mathrlap{\,,}
    \end{tikzcd}
  $$
  on those which are tensor products of an algebra of smooth functions on a $U \in \CartesianSpaces$ (cf. \cref{MilnorExercise}) with those on a $\mathbb{D} \in \mathrm{FrmPnt}$ (\cref{HaloedPoints}).

\item[\bf (ii)]  We regard this category as a site by equipping it with the coverage (Grothendieck pre-topology) whose covering families are of the form
  $$
    \big\{
    \begin{tikzcd}
      U_i \times \mathbb{D}
      \ar[
        rr,
        hook,
        "{ \iota_i \times \mathrm{id} }"
      ]
      &&
      U \times \mathbb{D}
    \end{tikzcd}
    \big\}_{i \in I}
  $$
  for
  $$
    \{\,
    \begin{tikzcd}
      U_i
      \ar[
        r,
        hook,
        "{ \iota_i }"
      ]
      &
      U
    \end{tikzcd}
    \, \}_{i \in I}
  $$
  a covering family in $\CartesianSpaces$ (a differentiably good open cover).
  \end{itemize}
\end{definition}

By construction, we have a fully faithful inclusion of sites:
\begin{equation}
  \label{CrtSpInsideFrmCrtSp}
  \begin{tikzcd}[row sep=-2pt, column sep=0pt]
    \CartesianSpaces
    \ar[
      rr,
      hook,
      "{ i }"
    ]
    &&
    \HaloedCartesianSpaces
    \\
    U 
      &\longmapsto&
    U \times \mathbb{D}^0
    \mathrlap{\,.}
  \end{tikzcd}
\end{equation}

\begin{definition}
\label[definition]
  {ToposOfFormalSmoothSets}
We say that the sheaf topos 
\begin{enumerate}
\item over $\CartesianSpaces$ is the category of \emph{smooth sets},
\item over $\HaloedCartesianSpaces$ is the category of \emph{formal smooth sets},
\end{enumerate}
and that the essential image of the former under the left base change $i_!$ along the inclusion \cref{CrtSpInsideFrmCrtSp} constitute the \emph{reduced} formal smooth sets:
\begin{equation}
  \label{SmthSetInsideFrmSmthSet}
  \begin{tikzcd}[
    sep=5pt
  ]
    \SmoothSets
    :=
    \mathrm{Sh}(\CartesianSpaces)
    \ar[
      r,
      shift left=6pt,
      hook,
      "{ 
        i_! 
      }"{description}
    ]
    \ar[
      r,
      phantom,
      "{ \bot }"{scale=.6}
    ]
    \ar[
      r,
      shift right=7pt,
      <-,
      "{ i^\ast }"{description}
    ]
    &[30pt]
    \mathrm{Sh}(\HaloedCartesianSpaces)
    =:
    \HaloedSmoothSets
    \mathrlap{\,,}
  \end{tikzcd}
\end{equation}
\end{definition}
Here $i_!$ on sheaves is the \emph{left Kan extension} $i_!$ of presheaves \cref{CoendFormulaFori!} followed by sheafification $L$, and we are using that $i^\ast$ manifestly preserves sheaves:
\begin{equation}
  \label{i!AndSheafification}
  \begin{tikzcd}
    \mathrm{PSh}(\mathrm{CrtSp})
    \ar[
      rr,
      shift left=7pt,
      "{ i_! }"{description}
    ]
    \ar[
      rr,
      phantom,
      "{ \bot }"{scale=.6}
    ]
    \ar[
      rr,
      <-,
      shift right=7pt,
      "{ i^\ast }"{description}
    ]
    &&
    \mathrm{PSh}(\mathrm{FrmCrtSp})
    \\
    \mathrm{Sh}(\mathrm{CrtSp})
    \ar[
      u, 
      hook'
    ]
    \ar[
      rr,
      shift left=8pt,
      "{ i_! }"{description}
    ]
    \ar[
      rr,
      phantom,
      "{ \bot }"{scale=.6}
    ]
    \ar[
      rr,
      <-,
      shift right=7pt,
      "{ i^\ast }"{description}
    ]
    &&
    \mathrm{Sh}(\mathrm{FrmCrtSp})
    \mathrlap{\,.}
    \ar[
      u,
      hook',
      shift left=6pt
    ]
    \ar[
      u,
      phantom,
      "{ \bot }"{scale=.6, rotate=-90}
    ]
    \ar[
      u,
      <-,
      shift right=6pt,
      "{ L }"{swap}
    ]
  \end{tikzcd}
\end{equation}

Now, basic category theory shows the following important properties:
\begin{lemma}
\label[lemma]
  {PropertiesOfSmthSetInsideFrmSmthSet}
The inclusion $i_!$ in \cref{SmthSetInsideFrmSmthSet}:
\begin{enumerate}
\item
preserves representables $\inlinetikzcd{V \in \mathrm{CrtSp} \ar[r, hook] \& \mathrm{SmthSet}}$ in that:%
\footnote{We are leaving the Yoneda embedding notationally implicit.}
\begin{equation}
  \label{i!PreservesRepresentables}
  \inlinetikzcd{
  i_! V \simeq V \defneq V \times \mathbb{D}^ 0
  \in
  \HaloedCartesianSpaces
  \ar[r, hook]
  \&
  \HaloedSmoothSets
  \mathrlap{\,,}
  }
\end{equation}

\item
is fully faithful, hence 
\begin{equation}
  \label{iAstIExc}
  i^\ast \circ i_!
  \simeq
  \mathrm{id}
  \mathrlap{\,,}
\end{equation}

\item
and such that 
the \emph{reduced} objects $\inlinetikzcd{X \in \SmoothSets \ar[r, hook, "{ i_! }"] \& \HaloedSmoothSets}$ are equivalently those which are isomorphic to their \emph{purely reduced aspect} $\reduced(X)$,
\begin{equation}
  \label{BeingReduced}
  X \simeq \reduced(X)
  \mathrlap{\,,}
\end{equation}
where
\begin{equation}
  \label{IExciAst}
  \reduced
  :=
  i_! \circ i^\ast
  \mathrlap{\,.}
\end{equation}
\end{enumerate}
\end{lemma}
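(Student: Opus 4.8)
The plan is to handle the three items in order, with items (i) and (ii) carrying the content and item (iii) following formally. Throughout, write $i_!^{\mathrm{PSh}}$ for the left Kan extension of presheaves along $i$, so that the $i_!$ of \cref{SmthSetInsideFrmSmthSet} is $L\circ i_!^{\mathrm{PSh}}$ with $L$ sheafification (as in \cref{i!AndSheafification}).

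For item (i): at the presheaf level, the left Kan extension along a fully faithful functor of sites sends a representable $y_U$ to the representable $y_{iU}$ --- immediate from the coend formula \cref{CoendFormulaFori!} and the Yoneda lemma (equivalently, $i_!^{\mathrm{PSh}}$ is the colimit-preserving extension of $y_{\HaloedCartesianSpaces}\circ i$, which on objects is $U\mapsto y_{iU}$). One then checks that $y_{iU}=y_{U\times\mathbb{D}^0}$ is already a sheaf on $\HaloedCartesianSpaces$: the good-open-cover coverage there is subcanonical, its covering families of $U\times\mathbb{D}$ being exactly the images $\{U_i\times\mathbb{D}\to U\times\mathbb{D}\}$ of good open covers $\{U_i\to U\}$, along which $C^\infty(U)\otimes C^\infty(\mathbb{D})$ satisfies descent. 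Hence $L$ leaves it fixed and $i_! y_U\simeq y_{iU}$, which is \cref{i!PreservesRepresentables}.

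For item (ii): the presheaf statement ``$i$ fully faithful $\Rightarrow$ the unit $\mathrm{id}\to i^\ast i_!^{\mathrm{PSh}}$ is an isomorphism'' is standard. The only real point in descending to sheaves is that $i^\ast$, being restriction along $i$, commutes with sheafification --- $i^\ast L\simeq L\,i^\ast$ --- which holds because every covering family of an object $U\times\mathbb{D}$ is the image under $i$ of a covering family of $U$, so $i^\ast$ both preserves sheaves and intertwines the plus-constructions. Granting this, for a smooth set $X$ one computes $i^\ast i_! X = i^\ast L\,i_!^{\mathrm{PSh}} X\simeq L\,i^\ast i_!^{\mathrm{PSh}} X\simeq LX\simeq X$, the last step since $X$ is a sheaf; thus the unit of $i_!\dashv i^\ast$ is an isomorphism, $i_!$ is fully faithful, and \cref{iAstIExc} follows. (Alternatively: $i_!$ preserves representables by item (i) and colimits as a left adjoint, and by the same $i^\ast L\simeq L\,i^\ast$ so does $i^\ast$ preserve the colimits presenting a general sheaf by representables; hence it is enough to check the unit on representables, where it reads $y_U\to i^\ast y_{iU}$, an isomorphism by full faithfulness of the site inclusion.)

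Item (iii) is then formal: with $\reduced:=i_!i^\ast$, if $X\simeq i_!Y$ for some smooth set $Y$ then $\reduced X\simeq i_!i^\ast i_!Y\simeq i_!Y\simeq X$ by item (ii), while conversely $X\simeq\reduced X=i_!(i^\ast X)$ exhibits $X$ in the essential image of $i_!$ by definition; so the reduced formal smooth sets of \cref{ToposOfFormalSmoothSets} are precisely the $X$ with $X\simeq\reduced X$, which is \cref{BeingReduced}. The only obstacle --- modest, as the paper's ``basic category theory shows'' anticipates --- is the site-theoretic bookkeeping invoked above: that the good-open-cover coverage on $\HaloedCartesianSpaces$ is subcanonical and that $i^\ast$ commutes with sheafification; once these are in hand, the remainder is routine adjunction manipulation.
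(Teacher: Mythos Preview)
Your proposal is correct and follows essentially the same approach as the paper. For item (i) the paper likewise uses the coend formula \cref{CoendFormulaFori!} to collapse $i_!^{\mathrm{PSh}}$ on representables (phrased there as showing every pair $(\iota,f)$ is equivalent to $(f\circ\iota,\mathrm{id})$) and then invokes subcanonicality; for item (ii) the paper simply cites standard references (Kelly and MacLane) where you spell out the site-level argument that $i^\ast$ commutes with sheafification, and item (iii) is dispatched formally in both.
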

\begin{proof}
  The first statement follows with the coend formula \cref{CoendFormulaFori!}: Every pair $(\iota,f)$ (of maps into a Cartesian space $V$ factoring through a Cartesian space $W$) is equivalent to its actual composite regarded as the pair $(f \circ \iota, \mathrm{id})$, like this:
  $$
    \begin{tikzcd}[
      row sep=-2pt, column sep=30pt
    ]
      & 
      W
      \ar[
        dr,
        "{ f }"
      ]
      \ar[
        dd,
        "{ f }"
      ]
      \\
      U \times \mathbb{D}
      \ar[
        ur,
        "{ \iota }"
      ]
      \ar[
        dr,
        "{
          f \circ \iota
        }"{sloped, swap}
      ]
      &&
      V .
      \\
      & 
      V
      \ar[
        ur, 
        "{ \mathrm{id} }"{swap}
      ]
    \end{tikzcd}
  $$
  This shows that $i_!$ on presheaves preserves representables. Finally, our coverage is clearly subcanonical in that representable presheaves are already sheaves.

  For the second statement cf.  \cite[Prop. 4.23]{Kelly1982} with \cite[\S IV.3 Thm. 1]{MacLane1998}. From this the last statement follows.
\end{proof}

A classical notion in infinite-dimensional differential geometry are \emph{Fr{\'e}chet manifolds} (cf. \cite[\S I.4]{Hamilton1982}). These are faithfully subsumed here:
\begin{lemma}
\label[lemma]{FrechetManifoldsAsSmoothSets}
  The evident inclusion of Fr{\'e}chet manifolds among smooth sets \textup(and hence further among formal smooth sets \cref{ToposOfFormalSmoothSets}) 
  \begin{equation}
    \label{FrechetManifoldsFullyFaithfulInSmoothSets}
    \begin{tikzcd}[sep=0pt]
      \mathrm{FrchtMfd}
      \ar[rr, hook]
      &&
      \mathrm{SmthSet}
      \\
      X 
        &\longmapsto&
      \left(
        U 
          \mapsto
        C^\infty(U,X)
      \right)
    \end{tikzcd}
  \end{equation}
  is fully faithful.
\end{lemma}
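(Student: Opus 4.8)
The plan is to verify that the assignment $X \mapsto C^\infty(-,X)$ lands in sheaves (so the stated functor to $\SmoothSets$ — and hence, composing with the fully faithful $i_!$ of \cref{PropertiesOfSmthSetInsideFrmSmthSet}, also to $\HaloedSmoothSets$ — is well-defined), and then that it is faithful and full, the last point carrying essentially all the content. Well-definedness is immediate: for a Fréchet manifold $X$ the presheaf $C^\infty(-,X)$ on $\CartesianSpaces$ is a sheaf, since given a differentiably good open cover $\{U_i \hookrightarrow U\}$ a compatible family $\gamma_i \in C^\infty(U_i,X)$ glues to a unique set-map $U \to X$, which is smooth because smoothness of a map into a manifold is local on the domain (no infinite-dimensionality enters). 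Faithfulness is equally quick: evaluating a natural transformation $C^\infty(-,X) \Rightarrow C^\infty(-,Y)$ at $U = \mathbb{R}^0$ recovers the underlying set-map $X \to Y$ (since $C^\infty(\mathbb{R}^0, X) = X$), so two smooth maps inducing the same transformation agree as set-maps and hence coincide.

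For fullness, given $\eta\colon C^\infty(-,X) \Rightarrow C^\infty(-,Y)$ I would set $f := \eta_{\mathbb{R}^0}\colon X \to Y$ and first check that $\eta$ is determined by $f$ on the nose: for any plot $\gamma \in C^\infty(U,X)$ and any point $u \in U$, naturality of $\eta$ along the point inclusion $\mathrm{pt}_u\colon \mathbb{R}^0 \to U$ gives $\eta_U(\gamma)(u) = \eta_{\mathbb{R}^0}(\gamma\circ\mathrm{pt}_u) = f(\gamma(u))$, so $\eta_U(\gamma) = f\circ\gamma$ as set-maps. In particular $f\circ\gamma \in C^\infty(U,Y)$ for every plot $\gamma$; that is, $f$ carries smooth curves in $X$ (indeed all plots) to smooth curves in $Y$. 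It then remains only to see that $f$ itself is smooth, for once that is known $C^\infty(-,f)$ is defined and the displayed identity reads $\eta = C^\infty(-,f)$, closing the argument.

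The main obstacle is exactly this last point: upgrading ``$f$ preserves smooth curves'' to ``$f$ is a smooth map of Fréchet manifolds''. I would argue this in charts, reducing to the statement that a map between open subsets of Fréchet spaces is smooth in the usual (Michal--Bastiani) sense iff it maps smooth curves to smooth curves --- a Boman-type theorem, which holds because Fréchet spaces are \emph{convenient} vector spaces on which the convenient and Michal--Bastiani notions of smoothness coincide, so that Fréchet manifolds form a full subcategory of convenient manifolds, where smoothness is detected by smooth curves. With that input invoked, the proof closes exactly as above; an alternative, should one wish to avoid convenient calculus, is to isolate first the special case that a map from a Cartesian space into an open subset of a Fréchet space is smooth iff it is a morphism of smooth sets (which rests on the same Boman-type fact) and then verify the chart representatives of $f$ directly.
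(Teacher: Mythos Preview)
Your argument is correct. The paper, however, does not give a proof at all: it simply cites the result as essentially Losik's theorem (\cite[Thm.~3.1.1]{Losik1994}, with a further pointer to \cite[Prop.~2.22]{KS26}). What you have written is an explicit unpacking of that citation, and you correctly isolate the one nontrivial ingredient: the Boman-type statement that a map between (open subsets of) Fr{\'e}chet spaces is Michal--Bastiani smooth iff it takes smooth curves to smooth curves, equivalently that Fr{\'e}chet manifolds sit fully faithfully inside convenient (or diffeological) manifolds. Everything else in your write-up --- the sheaf condition, faithfulness via evaluation at $\mathbb{R}^0$, and the naturality computation forcing $\eta_U(\gamma) = f\circ\gamma$ --- is formal and exactly right. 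So your approach is not so much \emph{different} from the paper's as it is a spelled-out version of what the cited references establish; the trade-off is that the paper is content to black-box the analytic input, while you make visible precisely where that input is needed.
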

\begin{proof}
  This is essentially the statement of \cite[Thm. 3.1.1]{Losik1994}, cf. \cite[Prop. 2.22]{KS26}.
\end{proof}

\subsection{Formal embeddings}
\label{OnFormalProperEmbeddings}

\begin{definition} 
  \label[definition]{FormalEmbedding}
  A map $\inlinetikzcd{i : U \times \mathbb{D} \ar[r] \& V}$ in $\HaloedCartesianSpaces$ (\cref{FormalCartesianSpaces}) is a \emph{formal embedding} if its restriction to $U \times \{0\}$ is an embedding of smooth manifolds and its restriction to each $\{u\} \times \mathbb{D}$ is a monomorphism. Moreover, this is a \emph{proper} formal embedding if the restriction to $U \times \{0\}$ is also a proper map.
\end{definition}
The following lemma is immediate but still worth recording:
\begin{lemma}
  \label[lemma]{FirstMapMayBeAssumedFormalEmbedding}
  Every $\inlinetikzcd{U \times \mathbb{D} \ar[r, "i"] \& V \ar[r, "{f}"] \& X }$ is equivalent \cref{GeneratingRelation} to $(\iota,f')$, where $\iota$ is a formal proper embedding \textup{(\cref{FormalEmbedding})} of arbitrarily high codimension.
\end{lemma}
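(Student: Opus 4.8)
The plan is to produce, from a given pair $(i : U\times\mathbb{D}\to V,\; f : V\to X)$, a single new pair $(\iota,f')$ with $\iota$ a proper formal embedding, related to $(i,f)$ by one instance of the generating relation \eqref{GeneratingRelation}; since that relation generates the equivalence relation under consideration, this will suffice. The one idea needed is to arrange the new intermediate Cartesian space $W$ so that it carries $V$ as a direct factor, with the comparison map taken to be the corresponding projection $W\twoheadrightarrow V$: this sidesteps the fact that $i$ need not be injective, so that we never have to ``invert'' it.

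Concretely, fix an embedding $\mathbb{D}\hookrightarrow\mathbb{R}^d$ (\cref{FormalCartesianSpaces}), let $N\in\mathbb{N}$ be arbitrary, and set
\[
  W \;:=\; V\times U\times\mathbb{R}^d\times\mathbb{R}^N \;\in\;\CartesianSpaces,
  \qquad
  \iota : U\times\mathbb{D}\longrightarrow W,\quad (u,\xi)\longmapsto\bigl(\,i(u,\xi),\;u,\;\xi,\;0\,\bigr),
\]
where in the third slot $\xi$ is read through $\mathbb{D}\hookrightarrow\mathbb{R}^d$. Put $\phi:=\mathrm{pr}_V : W\to V$ and $f':=f\circ\phi : W\to X$. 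Then $\phi\circ\iota=i$ and $f\circ\phi=f'$ hold by construction, which is exactly the data relating $(\iota,f')$ to $(i,f)$ via \eqref{GeneratingRelation}; in particular $f'\circ\iota=f\circ i$, so the plot $U\times\mathbb{D}\to X$ being represented is unchanged.

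It then remains to check that $\iota$ is a proper formal embedding (\cref{FormalEmbedding}) of codimension as high as we please. Its restriction to $U\times\{0\}$ is $u\mapsto\bigl(i|_{U\times\{0\}}(u),\,u,\,0,\,0\bigr)$, which has the identity of $U$ among its components; hence it is an immersion, a homeomorphism onto its image (projection to the $U$-factor is a continuous inverse), and proper (the preimage of a compact set is closed and bounded in the $U$-coordinate), i.e.\ a proper embedding of manifolds. Its restriction to each $\{u\}\times\mathbb{D}$ has the canonical monomorphism $\mathbb{D}\hookrightarrow\mathbb{R}^d$ as its $\mathbb{R}^d$-component and is therefore a monomorphism. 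Finally, the codimension of $\iota$ is $\dim W-\dim U=\dim V+d+N$, which grows without bound as $N$ increases. All of this is routine; the only point worth flagging — and it is minor — is the observation in the first paragraph that one should realize $V$ inside $W$ and compare along a projection, which is what allows the argument to go through with no hypothesis on $i$ at all.
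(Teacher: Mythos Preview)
Your proof is correct and follows essentially the same approach as the paper: both enlarge the intermediate space to $V \times (U \times \mathbb{R}^d)$ (you add an extra $\mathbb{R}^N$ factor, the paper simply notes $d$ may be arbitrarily large), set $\iota = (i,\,\mathrm{id}_U,\,\mathbb{D}\hookrightarrow\mathbb{R}^d,\ldots)$, and compare via the projection $\mathrm{pr}_V$. Your verification of the formal proper embedding conditions is spelled out more explicitly, but the content is the same.
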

\begin{proof}
 Choose a monomorphism (where $d$ may be arbitrarily large) of the form
 $\inlinetikzcd{\delta : U \times \mathbb{D} \ar[r, hook] \& U \times \mathbb{R}^d}$ (cf. \cite[Cor. 4.3(iii)]{GS26-FieldsII}), note that this makes the following diagram commute:
 $$
  \begin{tikzcd}[
    row sep=1pt,
    column sep=5em
  ]
    & V 
    \ar[dr, "{ f }"]
    \\
    U \times \mathbb{D}
    \ar[ur, "{ i }"]
    \ar[
      dr, 
      "{ 
        \iota 
          := 
        (i, \delta) 
      }"{sloped, swap}
    ]
    && 
    X
    \mathrlap{\,,}
    \\
    & V \times (U \times \mathbb{R}^d)
    \ar[
      ur,
      "{
         f' := f \circ \mathrm{pr}_V
      }"{swap, sloped}
    ]
    \ar[
      uu,
      "{  
        \mathrm{pr}_V
      }"{pos=.4}
    ]
  \end{tikzcd}
 $$
 and finally observe that $\iota := (i, \delta)$ is a formal proper embedding.
\end{proof}

\begin{lemma}
\label[lemma]
  {RectifyingProperEmbeddingsOfCartesianSpaces}
  For every proper embedding $\inlinetikzcd{\iota : \mathbb{R}^n \ar[r, hook] \& \mathbb{R}^{n+ c}}$ of sufficiently high codimension $c$, there exists a diffeomorphism of $\mathbb{R}^{n + c}$ under which the embedding is a rectilinear inclusion: $i \simeq (\mathrm{id}, 0)$.
\end{lemma}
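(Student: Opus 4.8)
Write $M := \iota(\mathbb{R}^n) \subset \mathbb{R}^{n+c}$. Because $\iota$ is a proper embedding, $M$ is a closed embedded submanifold diffeomorphic to $\mathbb{R}^n$; being contractible, its normal bundle in $\mathbb{R}^{n+c}$ is trivial, so by the tubular neighbourhood theorem a neighbourhood of $M$ is diffeomorphic to $\mathbb{R}^n \times \mathbb{R}^c$ in a way carrying $M$ onto $\mathbb{R}^n \times \{0\}$. The substantive content of the lemma is therefore that, in sufficiently high codimension, $M$ is \emph{unknotted}: there is a diffeomorphism $\Phi$ of all of $\mathbb{R}^{n+c}$ with $\Phi(M) = \mathbb{R}^n \times \{0\}$. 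Granting this, $\Phi \circ \iota$ is a diffeomorphism onto $\mathbb{R}^n \times \{0\}$, and absorbing it into a reparametrisation of the source gives $\iota \simeq (\mathrm{id}, 0)$ as asserted.

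To build $\Phi$ I would proceed through an exhaustion, so that the non-compactness of the spaces is isolated from the geometric input. Choose nested compact exhaustions, $\mathbb{R}^n$ by closed balls $D_1 \subset D_2 \subset \cdots$ and $\mathbb{R}^{n+c}$ by compact regions $E_1 \subset E_2 \subset \cdots$ with $\iota(D_k) \subset \mathrm{int}\, E_k$ and with $\iota$ transverse to each $\partial E_k$ (achievable after a preliminary small ambient isotopy), so that $M$ meets $E_k$ in a compact properly embedded submanifold-with-boundary of $E_k$ containing $\iota(D_k)$. One then straightens these inductively: at stage $k$ use the unknotting theorem for properly embedded balls in high codimension — in the spirit of Zeeman and Haefliger, valid once $c \ge c_0(n)$ — to isotope $M \cap E_k$ into standard position \emph{relative} to the already-straightened $M \cap E_{k-1}$, extend this isotopy to an ambient isotopy of the compact region $E_k$ by the ordinary isotopy extension theorem, and choose its support to avoid $E_{k-1}$; the resulting infinite composite of ambient diffeomorphisms is locally finite (supports recede to infinity), hence defines the desired $\Phi$. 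Alternatively, one may simply quote from the differential-topology literature on embeddings that for $c$ large all proper embeddings $\mathbb{R}^n \hookrightarrow \mathbb{R}^{n+c}$ are ambiently diffeomorphic, which is exactly this statement.

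The main obstacle is precisely this unknotting step together with the non-compactness bookkeeping around it. The codimension hypothesis is genuinely needed: in low codimension there are knotted proper embeddings of $\mathbb{R}^n$ in $\mathbb{R}^{n+c}$ — for instance nontrivial long knots when $n = 1$, $c = 2$ — which no ambient diffeomorphism can rectify. Making the exhaustions compatible (so that each stage can be straightened rel the previous one and so the supports march to infinity) and invoking the appropriate high-codimension unknotting statement is where essentially all the work lies; by contrast, the triviality of the normal bundle and the final passage from ``$\Phi(M) = \mathbb{R}^n \times \{0\}$'' to ``$\iota \simeq (\mathrm{id}, 0)$'' are routine.
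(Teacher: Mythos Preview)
Your proposal is correct in outline, but the route differs from the paper's. The paper argues in two short steps: first, proper maps extend to pointed maps between one-point compactifications, so $\iota$ and $(\mathrm{id},0)$ become pointed maps $S^n \to S^{n+c}$, which are null-homotopic for $c \ge 1$; this null-homotopy descends to a \emph{proper} smooth homotopy between $\iota$ and $(\mathrm{id},0)$. Second, Hirsch's results (\S8.1, Exercise~11 and Theorem~1.3) convert a proper homotopy in codimension $c > n+1$ into an ambient isotopy and thence a diffeotopy. No exhaustion or inductive straightening is needed: the non-compactness is handled in one stroke by compactifying, and the unknotting is a black-box citation.

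Your exhaustion-and-straighten approach is a legitimate alternative and is closer in spirit to how one would prove the Hirsch exercise from scratch, but it carries extra bookkeeping you acknowledge without fully resolving: arranging that $M \cap E_k$ is actually a ball (transversality to $\partial E_k$ alone does not give this), and making the relative Haefliger--Zeeman step precise. Your fallback ``alternatively, one may simply quote from the differential-topology literature'' is essentially what the paper does, only the paper names the specific source (Hirsch) and supplies the intermediate proper-homotopy step via compactification, which is the one idea you would still need to make that citation stick.
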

\begin{proof}
  First we claim that there exists a proper smooth homotopy from $i$ to $(\mathrm{id}, 0)$. This may be seen by observing that proper maps extend to pointed maps between one-point compactifications (cf. \cite[p. 80]{James1984}). In our case the extended maps are of the form $\inlinetikzcd{S^n \ar[r] \& S^{n + c}}$ and hence pointed null-homotopic for $c \geq 1$. This pointed null-homotopy comes from a proper homotopy between the original maps as desired.
  
  From this it follows that $i$ is related to $(\mathrm{id}, 0)$ by ambient isotopy (with \cite[\S 8.1 Exc. 11]{Hirsch1976}, which requires $c > 1 + n$) and thus by diffeotopy (with \cite[\S 8.1 Thm. 1.3]{Hirsch1976}).
\end{proof}

\subsection{Jet Bundles}
\label{OnJetBundles}

For traditional background on jet bundle geometry cf. \parencites{Saunders1989}{Takens1979}.

A finite-rank \emph{jet bundle} $J^k_\Sigma E$ is the bundle of all possible partial derivatives, of order $\leq k$, of sections of a given surjective submersion $\inlinetikzcd{p : E \ar[r] \& \Sigma}$ of smooth manifolds. 

This traditional notion has a slick synthetic differential formulation (\cite[\S 3.3]{KS26}\cite[\S 2.4]{GS26-FieldsII}) in terms of  the maximal infinitesimal disks $\inlinetikzcd{\mathbb{D}_s(k) \ar[r] \& \Sigma }$ (\cref{InfinitesimalDisks}) around any point $s \in \Sigma$: The fiber of $J^k_\Sigma E$ over $s$ consists of the sections of $p$ over $\mathbb{D}_s(k)$:
$$
  (J^k_\Sigma E)_s
  \simeq
  \mathrm{FrmSmthSet}_{/\Sigma}\left(
    \mathbb{D}_s(k)
    ,\,
    E
  \right)
  \defneq
  \left\{
  \begin{tikzcd}
    & 
    E
    \ar[d, ->>, "{ p }"{pos=.4}]
    \\
    \mathbb{D}_s(k)
    \ar[r, hook]
    \ar[
      ur, 
      dashed
    ]
    &
    \Sigma
  \end{tikzcd}
  \, \right\}
  \mathrlap{\,,}
$$
and $J^k_\Sigma E$ canonically inherits the structure of a smooth manifold this way.

Now, the evident inclusions $\inlinetikzcd{\mathbb{D}_s(k) \ar[r, hook] \& \mathbb{D}_s(k+1)}$ thus induce natural projections $\inlinetikzcd{J^{k+1}_\Sigma E \ar[r, ->>] \& J^k_\Sigma E}$, and the \emph{infinite rank} jet bundle (cf. \cite[\S 7]{Saunders1989}) is the projective limit over these projections:
\begin{equation}
  \label{InfiniteJetBundleInBackground}
  J^\infty_\Sigma E
  :=
  \varprojlim_{k \in \mathbb{N}}
  J^k_\Sigma E
  \mathrlap{\,.}
\end{equation}
Traditionally, this limit is understood in the category of Fr{\'e}chet manifolds (cf. \cref{FrechetManifoldsAsSmoothSets}), and it is not \emph{a priori} clear how this relates to the corresponding limit formed in formal smooth sets, since the inclusion $i_!$ \cref{SmthSetInsideFrmSmthSet} does not preserve limits in general 

That $i_!$ does preserve the particular limits \cref{InfiniteJetBundleInBackground} is the content of our main result \cref{i!PreservesInfiniteJetBundles}.


\printbibliography

\end{document}